\begin{document}

\theoremstyle{plain}
\newtheorem{thm}{Theorem}[section]
\newtheorem*{thm1}{Theorem 1}
\newtheorem*{thm2}{Theorem 2}
\newtheorem*{note*}{\it Note}
\newtheorem{lemma}[thm]{Lemma}
\newtheorem{lem}[thm]{Lemma}
\newtheorem{cor}[thm]{Corollary}
\newtheorem{prop}[thm]{Proposition}
\newtheorem{propose}[thm]{Proposition}
\newtheorem{variant}[thm]{Variant}
\theoremstyle{definition}
\newtheorem{notations}[thm]{Notations}
\newtheorem{rem}[thm]{Remark}
\newtheorem{rmk}[thm]{Remark}
\newtheorem{rmks}[thm]{Remarks}
\newtheorem{defn}[thm]{Definition}
\newtheorem{ex}[thm]{Example}
\newtheorem{exs}[thm]{Examples}
\newtheorem{claim}[thm]{Claim}
\newtheorem{ass}[thm]{Assumption}
\numberwithin{equation}{section}
\newcounter{elno}                
\def\points{\list
{\hss\llap{\upshape{(\roman{elno})}}}{\usecounter{elno}}} 
\let\endpoints=\endlist


\catcode`\@=11
%
%
\def\opn#1#2{\def#1{\mathop{\kern0pt\fam0#2}\nolimits}} 
\def\bold#1{{\bf #1}}%
\def\underrightarrow{\mathpalette\underrightarrow@}
\def\underrightarrow@#1#2{\vtop{\ialign{$##$\cr
 \hfil#1#2\hfil\cr\noalign{\nointerlineskip}%
 #1{-}\mkern-6mu\cleaders\hbox{$#1\mkern-2mu{-}\mkern-2mu$}\hfill
 \mkern-6mu{\to}\cr}}}
\let\underarrow\underrightarrow
\def\underleftarrow{\mathpalette\underleftarrow@}
\def\underleftarrow@#1#2{\vtop{\ialign{$##$\cr
 \hfil#1#2\hfil\cr\noalign{\nointerlineskip}#1{\leftarrow}\mkern-6mu
 \cleaders\hbox{$#1\mkern-2mu{-}\mkern-2mu$}\hfill
 \mkern-6mu{-}\cr}}}
%
%

%
\def\:{\colon}
\let\oldtilde=\tilde
\def\tilde#1{\mathchoice{\widetilde{#1}}{\widetilde{#1}}%
{\indextil{#1}}{\oldtilde{#1}}}
\def\indextil#1{\lower2pt\hbox{$\textstyle{\oldtilde{\raise2pt%
\hbox{$\scriptstyle{#1}$}}}$}}
\def\pnt{{\raise1.1pt\hbox{$\textstyle.$}}}
%

%
\let\amp@rs@nd@\relax
\newdimen\ex@\ex@.2326ex
\newdimen\bigaw@l
\newdimen\minaw@
\minaw@16.08739\ex@
\newdimen\minCDaw@
\minCDaw@2.5pc
\newif\ifCD@
\def\minCDarrowwidth#1{\minCDaw@#1}
\newenvironment{CD}{\@CD}{\@endCD}
\def\@CD{\def\A##1A##2A{\llap{$\vcenter{\hbox
 {$\scriptstyle##1$}}$}\Big\uparrow\rlap{$\vcenter{\hbox{%
$\scriptstyle##2$}}$}&&}%
\def\V##1V##2V{\llap{$\vcenter{\hbox
 {$\scriptstyle##1$}}$}\Big\downarrow\rlap{$\vcenter{\hbox{%
$\scriptstyle##2$}}$}&&}%
\def\={&\hskip.5em\mathrel
 {\vbox{\hrule width\minCDaw@\vskip3\ex@\hrule width
 \minCDaw@}}\hskip.5em&}%
\def\verteq{\Big\Vert&&}%
\def\noarr{&&}%
\def\vspace##1{\noalign{\vskip##1\relax}}\relax\let\amp@rs@nd@&\iffalse}\fi
 \CD@true\vcenter\bgroup\relax\let\\=\cr\iffalse}\fi\tabskip\z@skip\baselineskip20\ex@
 \lineskip3\ex@\lineskiplimit3\ex@\halign\bgroup
 &\hfill$\m@th##$\hfill\cr}
\def\@endCD{\cr\egroup\egroup}
%
\def\>#1>#2>{\amp@rs@nd@\setbox\z@\hbox{$\scriptstyle
 \;{#1}\;\;$}\setbox\@ne\hbox{$\scriptstyle\;{#2}\;\;$}\setbox\tw@
 \hbox{$#2$}\ifCD@
 \global\bigaw@\minCDaw@\else\global\bigaw@\minaw@\fi
 \ifdim\wd\z@>\bigaw@\global\bigaw@\wd\z@\fi
 \ifdim\wd\@ne>\bigaw@\global\bigaw@\wd\@ne\fi
 \ifCD@\hskip.5em\fi
 \ifdim\wd\tw@>\z@
 \mathrel{\mathop{\hbox to\bigaw@{\rightarrowfill}}\limits^{#1}_{#2}}\else
 \mathrel{\mathop{\hbox to\bigaw@{\rightarrowfill}}\limits^{#1}}\fi
 \ifCD@\hskip.5em\fi\amp@rs@nd@}
\def\<#1<#2<{\amp@rs@nd@\setbox\z@\hbox{$\scriptstyle
 \;\;{#1}\;$}\setbox\@ne\hbox{$\scriptstyle\;\;{#2}\;$}\setbox\tw@
 \hbox{$#2$}\ifCD@
 \global\bigaw@\minCDaw@\else\global\bigaw@\minaw@\fi
 \ifdim\wd\z@>\bigaw@\global\bigaw@\wd\z@\fi
 \ifdim\wd\@ne>\bigaw@\global\bigaw@\wd\@ne\fi
 \ifCD@\hskip.5em\fi
 \ifdim\wd\tw@>\z@
 \mathrel{\mathop{\hbox to\bigaw@{\leftarrowfill}}\limits^{#1}_{#2}}\else
 \mathrel{\mathop{\hbox to\bigaw@{\leftarrowfill}}\limits^{#1}}\fi
 \ifCD@\hskip.5em\fi\amp@rs@nd@}
%
%
\newenvironment{CDS}{\@CDS}{\@endCDS}
\def\@CDS{\def\A##1A##2A{\llap{$\vcenter{\hbox
 {$\scriptstyle##1$}}$}\Big\uparrow\rlap{$\vcenter{\hbox{%
$\scriptstyle##2$}}$}&}%
\def\V##1V##2V{\llap{$\vcenter{\hbox
 {$\scriptstyle##1$}}$}\Big\downarrow\rlap{$\vcenter{\hbox{%
$\scriptstyle##2$}}$}&}%
\def\={&\hskip.5em\mathrel
 {\vbox{\hrule width\minCDaw@\vskip3\ex@\hrule width
 \minCDaw@}}\hskip.5em&}
\def\verteq{\Big\Vert&}
\def\novarr{&}
\def\noharr{&&}
\def\SE##1E##2E{\slantedarrow(0,18)(4,-3){##1}{##2}&}
\def\SW##1W##2W{\slantedarrow(24,18)(-4,-3){##1}{##2}&}
\def\NE##1E##2E{\slantedarrow(0,0)(4,3){##1}{##2}&}
\def\NW##1W##2W{\slantedarrow(24,0)(-4,3){##1}{##2}&}
\def\slantedarrow(##1)(##2)##3##4{%
\thinlines\unitlength1pt\lower 6.5pt\hbox{\begin{picture}(24,18)%
\put(##1){\vector(##2){24}}%
\put(0,8){$\scriptstyle##3$}%
\put(20,8){$\scriptstyle##4$}%
\end{picture}}}
\def\vspace##1{\noalign{\vskip##1\relax}}\relax\let\amp@rs@nd@&\iffalse}\fi
 \CD@true\vcenter\bgroup\relax\let\\=\cr\iffalse}\fi\tabskip\z@skip\baselineskip20\ex@
 \lineskip3\ex@\lineskiplimit3\ex@\halign\bgroup
 &\hfill$\m@th##$\hfill\cr}
\def\@endCDS{\cr\egroup\egroup}
%
\newdimen\TriCDarrw@
\newif\ifTriV@
\newenvironment{TriCDV}{\@TriCDV}{\@endTriCD}
\newenvironment{TriCDA}{\@TriCDA}{\@endTriCD}
\def\@TriCDV{\TriV@true\def\TriCDpos@{6}\@TriCD}
\def\@TriCDA{\TriV@false\def\TriCDpos@{10}\@TriCD}
\def\@TriCD#1#2#3#4#5#6{%
\setbox0\hbox{$\ifTriV@#6\else#1\fi$}
\TriCDarrw@=\wd0 \advance\TriCDarrw@ 24pt
\advance\TriCDarrw@ -1em
\def\SE##1E##2E{\slantedarrow(0,18)(2,-3){##1}{##2}&}
\def\SW##1W##2W{\slantedarrow(12,18)(-2,-3){##1}{##2}&}
\def\NE##1E##2E{\slantedarrow(0,0)(2,3){##1}{##2}&}
\def\NW##1W##2W{\slantedarrow(12,0)(-2,3){##1}{##2}&}
\def\slantedarrow(##1)(##2)##3##4{\thinlines\unitlength1pt
\lower 6.5pt\hbox{\begin{picture}(12,18)%
\put(##1){\vector(##2){12}}%
\put(-4,\TriCDpos@){$\scriptstyle##3$}%
\put(12,\TriCDpos@){$\scriptstyle##4$}%
\end{picture}}}
\def\={\mathrel {\vbox{\hrule
   width\TriCDarrw@\vskip3\ex@\hrule width
   \TriCDarrw@}}}
\def\>##1>>{\setbox\z@\hbox{$\scriptstyle
 \;{##1}\;\;$}\global\bigaw@\TriCDarrw@
 \ifdim\wd\z@>\bigaw@\global\bigaw@\wd\z@\fi
 \hskip.5em
 \mathrel{\mathop{\hbox to \TriCDarrw@
{\rightarrowfill}}\limits^{##1}}
 \hskip.5em}
\def\<##1<<{\setbox\z@\hbox{$\scriptstyle
 \;{##1}\;\;$}\global\bigaw@\TriCDarrw@
 \ifdim\wd\z@>\bigaw@\global\bigaw@\wd\z@\fi
 \mathrel{\mathop{\hbox to\bigaw@{\leftarrowfill}}\limits^{##1}}
 }
 \CD@true\vcenter\bgroup\relax\let\\=\cr\iffalse}\fi
 \tabskip\z@skip\baselineskip20\ex@
 \lineskip3\ex@\lineskiplimit3\ex@
 \ifTriV@
 \halign\bgroup
 &\hfill$\m@th##$\hfill\cr
#1&\multispan3\hfill$#2$\hfill&#3\\
&#4&#5\\
&&#6\cr\egroup%
\else
 \halign\bgroup
 &\hfill$\m@th##$\hfill\cr
&&#1\\%
&#2&#3\\
#4&\multispan3\hfill$#5$\hfill&#6\cr\egroup
\fi}
\def\@endTriCD{\egroup} 

\newcommand{\mc}{\mathcal} 
\newcommand{\mb}{\mathbb} 
\newcommand{\surj}{\twoheadrightarrow} 
\newcommand{\inj}{\hookrightarrow} \newcommand{\zar}{{\rm zar}} 
\newcommand{\an}{{\rm an}} \newcommand{\red}{{\rm red}} 
\newcommand{\Rank}{{\rm rk}} \newcommand{\codim}{{\rm codim}} 
\newcommand{\rank}{{\rm rank}} \newcommand{\Ker}{{\rm Ker \ }} 
\newcommand{\Pic}{{\rm Pic}} \newcommand{\Div}{{\rm Div}} 
\newcommand{\Hom}{{\rm Hom}} \newcommand{\im}{{\rm im}} 
\newcommand{\Spec}{{\rm Spec \,}} \newcommand{\Sing}{{\rm Sing}} 
\newcommand{\sing}{{\rm sing}} \newcommand{\reg}{{\rm reg}} 
\newcommand{\Char}{{\rm char}} \newcommand{\Tr}{{\rm Tr}} 
\newcommand{\Gal}{{\rm Gal}} \newcommand{\Min}{{\rm Min \ }} 
\newcommand{\Max}{{\rm Max \ }} \newcommand{\Alb}{{\rm Alb}\,} 
\newcommand{\GL}{{\rm GL}\,} 
\newcommand{\ie}{{\it i.e.\/},\ } \newcommand{\niso}{\not\cong} 
\newcommand{\nin}{\not\in} 
\newcommand{\soplus}[1]{\stackrel{#1}{\oplus}} 
\newcommand{\oOplus}{\mbox{\fontsize{17.28}{21.6}\selectfont\( \oplus\)}}
\newcommand{\by}[1]{\stackrel{#1}{\rightarrow}} 
\newcommand{\longby}[1]{\stackrel{#1}{\longrightarrow}} 
\newcommand{\vlongby}[1]{\stackrel{#1}{\mbox{\large{$\longrightarrow$}}}} 
\newcommand{\ldownarrow}{\mbox{\Large{\Large{$\downarrow$}}}} 
\newcommand{\lsearrow}{\mbox{\Large{$\searrow$}}} 
\renewcommand{\d}{\stackrel{\mbox{\scriptsize{$\bullet$}}}{}} 
\newcommand{\dlog}{{\rm dlog}\,} 
\newcommand{\longto}{\longrightarrow} 
\newcommand{\vlongto}{\mbox{{\Large{$\longto$}}}} 
\newcommand{\limdir}[1]{{\displaystyle{\mathop{\rm lim}_{\buildrel\longrightarrow\over{#1}}}}\,} 
\newcommand{\liminv}[1]{{\displaystyle{\mathop{\rm lim}_{\buildrel\longleftarrow\over{#1}}}}\,} 
\newcommand{\norm}[1]{\mbox{$\parallel{#1}\parallel$}} 
\newcommand{\boxtensor}{{\Box\kern-9.03pt\raise1.42pt\hbox{$\times$}}} 
\newcommand{\into}{\hookrightarrow} \newcommand{\image}{{\rm image}\,} 
\newcommand{\Lie}{{\rm Lie}\,} 
\newcommand{\CM}{\rm CM}
\newcommand{\sext}{\mbox{${\mathcal E}xt\,$}} 
\newcommand{\shom}{\mbox{${\mathcal H}om\,$}} 
\newcommand{\coker}{{\rm coker}\,} 
\newcommand{\sm}{{\rm sm}} 
\newcommand{\tensor}{\otimes} 
\renewcommand{\iff}{\mbox{ $\Longleftrightarrow$ }} 
\newcommand{\supp}{{\rm supp}\,} 
\newcommand{\ext}[1]{\stackrel{#1}{\wedge}} 
\newcommand{\onto}{\mbox{$\,\>>>\hspace{-.5cm}\to\hspace{.15cm}$}} 
\newcommand{\propsubset} {\mbox{$\textstyle{ 
\subseteq_{\kern-5pt\raise-1pt\hbox{\mbox{\tiny{$/$}}}}}$}} 
\newcommand{\sB}{{\mathcal B}} \newcommand{\sC}{{\mathcal C}} 
\newcommand{\sD}{{\mathcal D}} \newcommand{\sE}{{\mathcal E}} 
\newcommand{\sF}{{\mathcal F}} \newcommand{\sG}{{\mathcal G}} 
\newcommand{\sH}{{\mathcal H}} \newcommand{\sI}{{\mathcal I}} 
\newcommand{\sJ}{{\mathcal J}} \newcommand{\sK}{{\mathcal K}} 
\newcommand{\sL}{{\mathcal L}} \newcommand{\sM}{{\mathcal M}} 
\newcommand{\sN}{{\mathcal N}} \newcommand{\sO}{{\mathcal O}} 
\newcommand{\sP}{{\mathcal P}} \newcommand{\sQ}{{\mathcal Q}} 
\newcommand{\sR}{{\mathcal R}} \newcommand{\sS}{{\mathcal S}} 
\newcommand{\sT}{{\mathcal T}} \newcommand{\sU}{{\mathcal U}} 
\newcommand{\sV}{{\mathcal V}} \newcommand{\sW}{{\mathcal W}} 
\newcommand{\sX}{{\mathcal X}} \newcommand{\sY}{{\mathcal Y}} 
\newcommand{\sZ}{{\mathcal Z}} \newcommand{\ccL}{\sL} 
 \newcommand{\A}{{\mathbb A}} \newcommand{\B}{{\mathbb 
B}} \newcommand{\C}{{\mathbb C}} \newcommand{\D}{{\mathbb D}} 
\newcommand{\E}{{\mathbb E}} \newcommand{\F}{{\mathbb F}} 
\newcommand{\G}{{\mathbb G}} \newcommand{\HH}{{\mathbb H}} 
\newcommand{\I}{{\mathbb I}} \newcommand{\J}{{\mathbb J}} 
\newcommand{\M}{{\mathbb M}} \newcommand{\N}{{\mathbb N}} 
\renewcommand{\P}{{\mathbb P}} \newcommand{\Q}{{\mathbb Q}} 

\newcommand{\R}{{\mathbb R}} \newcommand{\T}{{\mathbb T}} 
\newcommand{\U}{{\mathbb U}} \newcommand{\V}{{\mathbb V}} 
\newcommand{\W}{{\mathbb W}} \newcommand{\X}{{\mathbb X}} 
\newcommand{\Y}{{\mathbb Y}} \newcommand{\Z}{{\mathbb Z}} 

\title{Numerical characterizations for integral dependence of graded ideals}
\author{Suprajo Das}
\address{Department of Mathematics, Indian Institute of Technology Madras, Chennai, Tamil Nadu 600036, India}
\email{dassuprajo@gmail.com}
\author{Sudeshna Roy}
\address{Department of Mathematics, Indian Institute of Technology Gandhinagar, Palaj, Gandhinagar, Gujarat 382055, India}
\email{sudeshnaroy.11@gmail.com; sudeshna.roy@iitgn.ac.in}
\author{Vijaylaxmi Trivedi}
\address{Department of Mathematics, University at Buffalo (SUNY),  Buffalo, NY 14260}
\email{vija@math.tifr.res.in; vijaylax@buffalo.edu}

\begin{abstract}
Let $R=\oplus_{m\geq 0}R_m$ be a standard graded  equidimensional ring  over a field $R_0$ and $I\subseteq J$ be two non-nilpotent graded ideals in $R$. Then we give a set of numerical characterizations of the integral dependence  of $I$ and $J$ in terms of certain multiplicities. A novelty of this approach is that it does not involve localization and only requires checking computable and well-studied invariants.

In particular, we show the following: let $S=R[y]$, $\mathsf{I} = IS$ and $\mathsf{J} = JS$ and $\bf d$ be the maximum of the generating degrees of both $I$ and $J$. Let $c>{\bf d}$ be any given integer. Then
$$\overline{I} = \overline{J}\iff e\big(S[\mathsf{I}t]_{\Delta_{(c,1)}}\big) =  e\big(S[\mathsf{J}t]_{\Delta_{(c,1)}}\big),$$
where, for an ideal $I$ in $R$, the integer  $e\big(S[\mathsf{I}t]_{\Delta_{(c,1)}}\big)$ 
denotes the Hilbert-Samuel multiplicity of the standard graded domain $S[\mathsf{I}t]_{\Delta_{(c,1)}} = \oplus_{n\geq 0}(\mathsf{I}^n)_{cn}t^n$.
Further, if $I$ is of finite colength in $R$ then
$e\big(S[\mathsf{I}t]_{\Delta_{(c,1)}}\big) = 
c^de(R) - e(I,R)$.

If $R$ is a domain in addition then other numerical characterizations are the following:
\begin{align*}
\overline{I}  = \overline{J} & \iff \varepsilon(I)=\varepsilon(J)\;\;\mbox{and}\;\; e_i(R[It]) = e_i(R[Jt])\;\;\mbox{for all}\;\; 0\leq i <\dim(R/I),\\
& \iff 
 \varepsilon(I)=\varepsilon(J)\;\;\mbox{and}\;\;
 e\big(R[It]_{\Delta_{(c,1)}}\big) =  e\big(R[Jt]_{\Delta_{(c,1)}}\big)\;\;\mbox{for some integer}\;\; c > {\bf d},\\
& \iff  \varepsilon(\mathsf{I}_{\geq c})=\varepsilon(\mathsf{J}_{\geq c}),
\end{align*} 
where, for an ideal $I$ in $R$, $\varepsilon(I)$ and $\varepsilon(\mathsf{I}_{\geq c})$ denote the epsilon multiplicity of $I$ and $\mathsf{I}_{\geq c}$ respectively, and $e_i(R[It])$'s are the mixed multiplicities of the Rees algebra $R[It]$. The relation between $e_i(S[\mathsf{I}t])$ and the polar multiplicities of $\mathsf{I}_{\geq {\bf d}}$ provides another criterion in terms of polar multiplicities of $\mathsf{I}_{\geq {\bf d}}$.

The first two characterizations generalize the classical result of Rees for ideals of finite colengths.
Apart from several well-established results, the proofs of these results use the theory of density functions which was developed  in \cite{DRT24}.
\end{abstract}

\maketitle

\section{Introduction}

The main objective of this article is to provide a new (necessary and sufficient) numerical criterion for detecting integral dependence of arbitrary homogeneous ideals in a standard graded equidimensional ring  over a field. A novelty in our approach is that it does not involve localizations. This naturally came up as an application of various density functions, which the authors developed in their previous article \cite{DRT24}.

The idea of characterizing integral dependence through numerical invariants, was initiated in the pioneering work of Rees \cite{Ree61}. Since then finding such criteria became an important task in both commutative algebra and singularity theory. We shall now briefly trace the developments in this direction in chronological order. For a nice detailed survey, one may refer to \cite[Chapter 11]{HS06} or \cite[Introduction]{PTUV20}.

Let $(R, \mathbf{m})$ be a formally equidimensional Noetherian local ring. Rees \cite{Ree61} observed that two $\mathbf{m}$-primary ideals $I \subseteq J$ in $R$ have the same integral closure if and only if they have the same Hilbert-Samuel multiplicity. This result was applied by Teissier in his study of equisingularities, especially in his proof of Principle of Specialization of Integral Dependence (PSID), see \cite{Tei73}, \cite{Tei80a} and \cite{Tei80b}. B\"oger \cite{Bog69} extended Rees' result by showing that two ideals $I \subseteq J$ in $R$, where $I$ is equimultiple, have the same integral closure if and only if their localizations at every minimal prime over $I$ attain the same Hilbert-Samuel multiplicities. After a gap of three decades, Gaffney and Gasler \cite{GG99} extended the PSID for non $\mathbf{m}$-primary ideals by using the notion of Segre numbers. It allowed them to obtain generalizations of the earlier results of Rees and B\"oger in the complex analytic setting.

Since the numerical invariant Hilbert-Samuel multiplicity, which is used to characterize the integral dependence, can be  defined only for $\mathbf{m}$-primary ideals, mathematicians were led to define other notions of multiplicities that can be used to capture integral dependence. In \cite{AM93} Achilles and Manaresi introduced the concept of \emph{$j$-multiplicity} that is defined for any ideal, whereas the notion of \emph{$\varepsilon$-multiplicity} (also defined for any ideal) was introduced by Ulrich and Validashti \cite{UV11}. Flenner and Manaresi \cite{FM01} proved that if $I\subseteq J$ are arbitrary ideals in $R$, then they have the same integral closure if and only if they have the same $j$-multiplicity at every prime ideal where $I$ has maximal analytic spread. A similar statement using $\varepsilon$-multiplicity was given by Katz and Validashti in \cite{KV10}. However, both criteria require localization which may be difficult to verify in practice. There are also extensions of Rees’ theorem to the case of modules and algebras, see \cite{KT94}, \cite{Gaf03}, \cite{SUV01}, \cite{Cid24}, and \cite{CRPU24}.

Another important invariant is the so-called \emph{multiplicity sequence} of an ideal $I$ and it was defined by Achilles and Manaresi \cite{AM97}. The $j$-multiplicity of $I$ appears as one of the terms in its multiplicity sequence and the Segre numbers are a special case of the multiplicity sequence. Ciuperc\u{a} \cite{Ciu03} showed that if two ideals $I\subseteq J$ have the same integral closure then they have the same multiplicity sequence. Recently, Polini, Trung, Ulrich and Validashti \cite{PTUV20}  established a converse to this statement and thereby generalized Rees' theorem for arbitrary ideals.

In this paper  we give  simple criteria in the graded setup  for checking the integral dependence of two ideals $I\subseteq J$ in terms of various well-studied invariants. Here, we recall the notations used in the paper.

\subsection{Notations}\label{nnotation}
Let $k$ be a field and $R =\oplus_{m\geq 0}R_m$ be a standard graded finitely generated equidimensional algebra over $R_0=k$ of dimension $d\geq 2$. Let ${\bf m}=\oplus_{m\geq 1}R_m$ be the unique homogeneous maximal ideal of $R$. Let $I\subseteq J$ be two nonzero homogeneous ideals in $R$. Let $d(I)$ and $d(J)$ be the maximum generating degrees of $I$ and $J$ respectively, and set $\mathbf{d} = \max\{d(I), d(J)\}$. For any integer $c\geq {\bf d}$, let $I_{\geq c}=\oplus_{m\geq c}I_m$ and $ J_{\geq c}=\oplus_{m\geq c}J_m$ be the corresponding truncated ideals in $R$. Notice that $I_{\geq c} = I\cap {\bf m}^c$ and $J_{\geq c} = J\cap {\bf m}^c$. Let $$R[It]=\oOplus_{(m,n)\in\N^2}{(I^n)}_mt^n\quad \text{and}\quad R[Jt]=\oOplus_{(m,n)\in\N^2}{(J^n)}_mt^n$$ be the bigraded Rees algebras of $I$ and $J$ respectively. Further consider the $(c,1)$-diagonal subalgebras of $R[It]$ and $R[Jt]$ respectively, i.e., $$R[It]_{\Delta_{(c,1)}}  = \oOplus_{n\geq 0} {(I^{n})}_{cn}t^{n}\quad\mbox{and}\quad R[Jt]_{\Delta_{(c,1)}}  = \oOplus_{n\geq 0} {(J^{n})}_{cn}t^{n}.$$

Define $S=R[y]$, where $y$ is an indeterminate with $\deg y=1$, and $\mathbf{n} = \mathbf{m} + (y)$ be the unique homogeneous maximal ideal of $S$. Let $\mathsf{I} = IS$ and $\mathsf{J} = JS$ be the extensions of the ideals $I$ and $J$ in $S$ respectively. For $c\geq {\bf d}$, similarly define the truncated ideals $\mathsf{I}_{\geq c}=\oplus_{m\geq c}\mathsf{I}_m$ and $\mathsf{J}_{\geq c}=\oplus_{m\geq c}\mathsf{J}_m$ in $S$. We also consider the $(c,1)$-diagonal subalgebras $S[{\mathsf I}t]_{\Delta_{(c,1)}} = \oOplus_{n\geq 0}\left({\mathsf I}^{n}\right)_{cn}t^{n}$ and $S[{\mathsf J}t]_{\Delta_{(c,1)}} = \oOplus_{n\geq 0}\left({\mathsf J}^{n}\right)_{cn}t^{n}$ of the Rees algebras $S[{\mathsf I}t]$ and $S[{\mathsf J}t]$ respectively.

Further, recall that the Hilbert-Samuel multiplicity of a $d$-dimensional finitely generated graded $k$-algebra $A=\oplus_{m\geq 0}A_m$, is given by $e(A):=\lim_{m\to\infty}\tfrac{\ell_k(A_m)}{m^{d-1}/(d-1)!}$.

\begin{thm}[Theorem \ref{equidim}]\label{B1}
Adopt Notations \ref{nnotation}. Then
$$\overline{I} = \overline{J}\quad\text{if and only if}\quad e\big(S[\mathsf{I}t]_{\Delta_{(c,1)}}\big) =  e\big(S[\mathsf{J}t]_{\Delta_{(c,1)}}\big)\;\;\text{for some (every) integer}\;\; c>{\bf d}.$$
Further, if $I$ is of finite colength then
$$e\big(S[\mathsf{I}t]_{\Delta_{(c,1)}}\big) = 
c^de(R) - e(I,R)\quad\mbox{and}\quad
e\big(S[\mathsf{J}t]_{\Delta_{(c,1)}}\big) = 
c^de(R) - e(J,R).$$
\end{thm}

We give another numerical characterization which involves $\varepsilon$-multiplicity.

\begin{thm}[Theorem \ref{integral1}]\label{A} Adopt Notations \ref{nnotation} and further assume that $R$ is a domain. Then the following statements are true:
\begin{enumerate}
\item $\ell_R\left(\overline{J}/\overline{I}\right)<\infty$ if and only if $e_i(R[It]) = e_i(R[Jt])$ for all $0\leq i < \dim(R/I)$.
\item $\overline{J} = \overline{I}$ if and only if $\varepsilon(I)=\varepsilon(J)$ and $e_i(R[It]) = e_i(R[Jt])$ for all $0\leq i <\dim(R/I).$
\end{enumerate}
\end{thm}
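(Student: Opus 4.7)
My plan is to use the density function machinery developed in \cite{DRT24}, which attaches to a graded pair $(R, I)$ a family of non-negative continuous density functions on $[0, \infty)$ whose integrals recover the multiplicities in question. Specifically, I will use density functions $f^i_{R, I}$ with $\int_0^{\infty} f^i_{R, I} = e_i(R[It])$ for $0 \leq i < \dim(R/I)$, together with the $\varepsilon$-density function $f^{\varepsilon}_{R, I}$ integrating to $\varepsilon(I)$. Three properties from \cite{DRT24} will be crucial for the argument: (i) invariance under integral closure of the ideal; (ii) monotonicity under inclusion, so that $I \subseteq J$ implies $f^i_{R, I} \leq f^i_{R, J}$ pointwise, and similarly for $f^{\varepsilon}$; (iii) the $f^i$ are unchanged upon modifying $I$ by an $\mathbf{m}$-primary ideal, while $f^{\varepsilon}$ is precisely the piece that records such modifications.

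For part (1) I would first replace $I$ and $J$ by their integral closures. The forward direction is then immediate: if $\ell_R(\overline{J}/\overline{I}) < \infty$, property (iii) gives $f^i_{R, I} = f^i_{R, J}$ and hence $e_i(R[It]) = e_i(R[Jt])$. For the converse I would combine the equality of integrals with the pointwise inequality from (ii) to force $f^i_{R, I} = f^i_{R, J}$ for every $0 \leq i < \dim(R/I)$. Unpacking this via the Hilbert-function description of the density functions, the bigraded Hilbert function of the quotient $\overline{J}/\overline{I}$ drops in degree, which translates (using that $R$ is a graded domain) to $\overline{J}/\overline{I}$ being supported only at $\mathbf{m}$, hence of finite length.

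For part (2) the forward direction is trivial, so I focus on the converse. Applying part (1), I reduce to the case $\ell_R(\overline{J}/\overline{I}) < \infty$. The assumed equality of $\varepsilon$-multiplicities, together with the monotonicity $f^{\varepsilon}_{R, I} \leq f^{\varepsilon}_{R, J}$ and the equality of integrals, yields $f^{\varepsilon}_{R, I} = f^{\varepsilon}_{R, J}$ pointwise. By property (iii), $f^{\varepsilon}$ records exactly the $\mathbf{m}$-primary part of the ideal in its asymptotic growth, so the coincidence of these two density functions forces $\overline{I} = \overline{J}$.

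The main obstacle I expect is the last step of part (1): showing that pointwise equality of the full family $\{f^i_{R, I}\}_{0 \leq i < \dim(R/I)}$ really is strong enough to force $\overline{J}/\overline{I}$ to have dimension strictly less than $\dim(R/I)$. A priori one might worry that some top-dimensional contribution in $\overline{J}/\overline{I}$ could be invisible to all the density functions $f^i$ in this range. Dispelling this will require careful use of the explicit construction of the $f^i$ as limits of rescaled Hilbert functions from \cite{DRT24}, together with the fact that the collection $\{e_i(R[It])\}_{0 \leq i < \dim(R/I)}$ captures the full top-dimensional information of the bigraded Rees algebra $R[It]$.
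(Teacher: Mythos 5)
The framework you build on does not exist in \cite{DRT24} and, more importantly, cannot exist in the form you describe. You postulate non-negative density functions $f^i_{R,I}$ with $\int_0^\infty f^i_{R,I}=e_i(R[It])$, but the RA-multiplicities $e_i(R[It])$ can be negative (e.g.\ $e_0(R[It])=-2$ in Example~\ref{elliptic} of the paper), so no such non-negative density function can integrate to them. In the paper only three density functions are attached to $I$: the adic density $f_{\{I^n\}}$, the saturation density $f_{\{\widetilde{I^n}\}}$, and $f_{\varepsilon(I)}=f_{\{\widetilde{I^n}\}}-f_{\{I^n\}}$. The $e_i(R[It])$ are not integrals of any of these; they appear as the \emph{polynomial coefficients} of $f_{\{\widetilde{I^n}\}}(x)$ for $x\ge d(I)$ (see equation~\eqref{mm}). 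This is the mechanism that converts ``$e_i(R[It])=e_i(R[Jt])$ for all relevant $i$'' into ``$f_{\{\widetilde{I^n}\}}(x)=f_{\{\widetilde{J^n}\}}(x)$ for $x\gg 0$,'' and your indexed density functions do not replace it.

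The converse direction in part (1) is exactly where your argument has no content. Going from equality of RA-multiplicities to $\ell_R(\overline J/\overline I)<\infty$ is the hard step, and the paper settles it by Proposition~\ref{sing_pt_adic} (an algebraic argument via \cite[Theorem 3.3]{SUV01}) or Proposition~\ref{impsat1} (via \cite[Theorem A]{FKL16}). Your proposed ``unpacking via the Hilbert-function description'' is precisely the nontrivial part and is not an unpacking at all; you correctly identify this as the likely obstacle, and indeed it is not resolved by your outline. Similarly, part (2) needs the Amao--Rees theorem: after reducing to $\ell_R(\overline J/\overline I)<\infty$, one writes $\varepsilon(I)-\varepsilon(J)=\int(f_{\{J^n\}}-f_{\{I^n\}})$ (using that the saturation densities agree) and invokes Amao--Rees to conclude that a vanishing normalized length growth forces $I$ to be a reduction of $J$. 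Your step ``the coincidence of these two density functions forces $\overline I=\overline J$'' skips this entirely. Note also that you assert $f^\varepsilon_{R,I}\le f^\varepsilon_{R,J}$ for $I\subseteq J$; there is no such monotonicity for $\varepsilon$-density functions. The inequality that actually holds (and is used) is $f_{\{I^n\}}\le f_{\{J^n\}}$, and the paper derives $f_{\varepsilon(I)}-f_{\varepsilon(J)}=f_{\{J^n\}}-f_{\{I^n\}}\ge 0$ only after establishing the equality of saturation densities.
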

Here $e_i(R[It])$ denotes the $i^{\mathrm{th}}$ RA-multiplicity of $R[It]$, see Notations \ref{n3}.

In \cite[Example 6.9]{CRPU24} the authors give an example which provides a negative answer to the following question, which they refer as  a folklore question: {\em Can polar multiplicities detect integral dependence?} Their example is in the graded setting. However, they show in \cite[Corollary 6.15]{CRPU24} that the polar multiplicities do detect the integral dependence of $I\subseteq J$ provided $I$ and $J$ are equigenerated ideals generated by same degree elements.

In the light of Remark \ref{polarmultiplicity}, we reconcile with their statements as follows. Here Theorem \ref{A} asserts that the polar multiplicities of $(R, I_{\geq \beta})$ and $(R,J_{\geq\beta})$ coincide if and only if $\ell_R({\overline J}/{\overline I})<\infty$, where $\beta\geq {\bf d}$ is any integer. Further, the polar multiplicities of $(R, I_{\geq \beta})$ and $(R,J_{\geq\beta})$ together with the epsilon multiplicities of $I$ and $J$ characterize the integral dependence of $I$ and $J$. Now when $I$ and $J$ are equigenerated and $\ell_R({\overline J}/{\overline I})< \infty$ then, by Corollary \ref{equigen}, the equality $\varepsilon(I) = \varepsilon(J)$ holds if and only if $I$ and $J$ are generated by same degree generators.

In general (when $I$ and $J$ are not necessarily equigenerated) Theorem \ref{B} implies that the  polar multiplicities of $(S,\mathsf{I_{\geq\beta}})$ and $(S,\mathsf{J_{\geq\beta}})$ characterize the integral dependence of $I$ and $J$. It also asserts that the $\varepsilon$-multiplicities (or $j$-multiplicities) of $(S,\mathsf{I_{>\beta}})$ and $(S,\mathsf{J_{>\beta}})$ are sufficient to characterize the integral dependence of $I$ and $J$.

We note that both  Theorem \ref{B1} and Theorem \ref{A} give a natural generalization of the classical theorem of Rees \cite{Ree61} in the graded situation. We shall now comment on the computational aspects of Theorem \ref{A}.

It is shown in \cite{DDRV24} that there exists a relationship  between multiplicities $e_i(R[It])$ of the Rees algebra $R[It]$ and mixed multiplicities of the pair of ideals $({\bf m}, I)$. The mixed multiplicities of a given  pair of ideals can be computed using the Macaulay2 package {\sf MixedMultiplicity} presented in \cite{GMRV23}. This makes criterion $(1)$ of Theorem \ref{A} computable. On the other hand, verifying criteria $(2)$ of Theorem \ref{A} can be hard. This is because $\varepsilon$-multiplicity is a difficult invariant to compute (in fact, it can take irrational values). Though, in many interesting situations, there are some methods available to compute the $\varepsilon$-multiplicities, see \cite{DDRV24}, \cite{JM13}, and \cite{JMV15}.

Following  theorem  gives a characterization of integral dependence in terms of well-known invariants which are amenable to computations, where $e_{i}\big(\mathbf{n} | \mathsf{I}_{\geq \bf d} \big)$ denote the $i^{\mathrm{th}}$ mixed multiplicity for the pair $({\mathbf{n}},\mathsf{I}_{\geq\bf d})$, see Section \ref{mixmul}. In Section \ref{M2section} we write a Macaulay2 script based on the criteria described in the theorem to check the equality of the integral closures.

\begin{thm}[Theorem \ref{comp}]\label{B}
Adopt Notations \ref{nnotation} and further assume that $R$ is a domain. Then the following  statements are equivalent:
\begin{enumerate}
\item[$(i)$] $\overline{I} = \overline{J}$.
\item[$(ii)$] $e_i\left(S[\mathsf{I}t]\right) = e_i\left(S[\mathsf{J}t]\right)$ for all $i$, where $0\leq i\leq \dim R/I$.
\item[$(iii)$] $e_{i}\big(\mathbf{n} |\mathsf{I}_{\geq\bf d}\big)=e_{i}\big(\mathbf{n} |\mathsf{J}_{\geq\bf d}\big)$ for all $i$, where $0\leq d-i \leq \dim\;R/I$.
\item[$(iv)$] $\varepsilon\big(\mathsf{I}_{\geq c}\big) = \varepsilon\big(\mathsf{J}_{\geq c}\big)$ for some integer (all integers) $c>{\bf d}$.
\item[$(v)$] $j\big(\mathsf{I}_{\geq c}\big) = j\big(\mathsf{J}_{\geq c}\big)$ for some integer (all integers) $c>{\bf d}$.
\end{enumerate}
\end{thm}

We now give a brief outline of the methods used in the proofs. In \cite{DRT24} we have established the following: given a homogeneous ideal $I$ in $R$ one may associate three real-valued nonnegative density functions (see Section \ref{preliminaries}), namely the {\em adic density function} $f_{\{I^n\}}$, the {\em saturation density function} $f_{\{\widetilde {I^n}\}}$, and the {\em $\varepsilon$-density function} $f_{\varepsilon(I)}$. All these functions are continuous almost everywhere (in fact, the number of points of discontinuity is at the most two). Further, $f_{\varepsilon(I)} = f_{\{\widetilde {I^n}\}} - f_{\{I^n\}}$ and $f_{\{\widetilde {I^n}\}}(x) = f_{\{I^n\}}(x)$, for all $x> d(I)$. In particular, $f_{\varepsilon(I)}$ is compactly supported and $\int_{\R} f_{\varepsilon(I)} = \varepsilon(I)$. We showed that the three density functions do not change if we replace $I$ by its integral closure ${\overline I}$.

In order to give a numerical characterization for  ${\overline{I}} = \overline{J}$, we first provide a numerical characterization for the weaker assertion, namely the finiteness of $\ell_R({\overline{J}}/{\overline{I}})$. Here we see that this finiteness condition is equivalent to the equality of the corresponding saturation density functions, where we know that the saturation density functions are nothing but restriction of the volume functions to appropriate line segments in the real N\'eron-Severi space of $\mathrm{Proj}\; R$. So one way implication is straightforward as the saturation density function for an ideal ${\overline{I}}$ can be realised in terms of the associated ideal sheaf ${\overline{\sI}}$ on $\mathrm{Proj}\;R$.

However, the heart of the argument lies in establishing the converse. In fact, a much stronger statement holds, which is, if the saturation density functions for $I$ and $J$ agree at some (every) integer $c>{\bf d}$ then $\ell_R(\overline{J}/\overline{I}) <\infty$. For this we provide two different proofs: $(1)$ an algebraic proof (Proposition \ref{sing_pt_adic}) based on a result from \cite[Theorem 3.3]{SUV01}, and $(2)$ a geometric proof (Proposition \ref{impsat1}) using a result on the equality of {\em volume functions} from \cite[Theorem A]{FKL16}. However, for the geometric proof we require the extra hypothesis that the underlying field is perfect, but then we are allowed to choose $c>{\bf d}$ to be any  real number (which gives more options to choose the diagonal subalgebras). On the other hand, we know that the equality of the saturation density functions at the point $x=c$ is same as the equality  $e\big(R[It]_{\Delta_{(c,1)}}\big) = e\big(R[Jt]_{\Delta_{(c,1)}}\big)$. Note that both $R[It]_{\Delta_{(c,1)}}$ and $R[Jt]_{\Delta_{(c,1)}}$ are $d$-dimensional standard graded Noetherian domains over $R_0$. Thus in Theorem \ref{impsat}, using saturation density functions, we give the following numerical characterization of finiteness of $\ell_R({\overline{J}}/{\overline{I}})$.

\begin{enumerate}
\item $\ell_R({\overline{J}}/{\overline{I}}) <\infty$.
\item $f_{\{\widetilde{I^n}\}}(x) = f_{\{\widetilde{J^n}\}}(x)$ for all $x\geq 0$.
\item $e\big(R[It]_{\Delta_{(c,1)}}\big) = e\big(R[Jt]_{\Delta_{(c,1)}}\big)$ for some integer $c>{\bf d}$.
\end{enumerate}

We first note that the equality $\overline{I} = \overline{J}$ holds if and only $\ell_S(\overline{\mathsf{J}}/\overline{\mathsf{I}})
<\infty$. Now, by construction, for a given graded ideal $I$, the adic density function $f_{\{I^n\}}$ is a pointwise limit of integrable step functions, and
therefore using the Lebesgue's dominated convergence theorem, we can express in (\ref{satdensity}) the saturated density function
$f_{\{\widetilde{\mathsf{I^n}}\}}$ as integrals of the adic density function $f_{\{I^n\}}$. On the other hand for ideals  $I\subseteq J$ the function
$f_{\{J^n\}}-f_{\{I^n\}}$ is a nonnegative function which is continuous at almost all points. Therefore by (\ref{satdensity}) we deduce that the equality of the
adic density functions is same as the equality of the saturation density functions for the respective extended ideals.
Thus, in Theorem~\ref{adicfunction} we establish the equivalence of the following statements:
\begin{enumerate}
\item ${\overline{I}} = {\overline{J}}$.
\item $f_{\{I^n\}}(x) = f_{\{J^n\}}(x)$ for all $x\geq 0$.
\item $f_{\{\widetilde{\mathsf{I}^n}\}}(x) = f_{\{\widetilde{\mathsf{J}^n}\}}(x)$ for all $x\geq 0$.
\item $e\big(S[\mathsf{I}t]_{\Delta_{(c,1)}}\big) = e\big(S[\mathsf{J}t]_{\Delta_{(c,1)}}\big)$ for some (every) integer $c > {\bf d}$.
\end{enumerate}

It was previously shown in \cite[Corollary 6.1]{DRT24} that $f_{\{I^n\}} = f_{\{J^n\}}$ is equivalent to the assertion $e\big(R[It]_{\Delta_{(p,q)}}\big) = e\big([Jt]_{\Delta_{(p,q)}}\big)$, for all $p, q \in \N$.

Since the numerical characterizations are in terms of well-studied invariants, we expect that the techniques and results of this article will have applications from both theoretical and computational points of view. In future, we would like to approach such results for local rings by extending the notion of density functions in the local setup.

\section{Preliminaries}\label{preliminaries}
\subsection{Some results on density functions} In \cite{DRT24} we have established the existence of density functions to study the asymptotic growth of the ideals arising from the  powers of graded ideals. Here we recall the definitions and properties of the three density functions, namely,
$(1)$ $I$-adic density function,
$(2)$ saturation density function and
$(3)$ epsilon density function.
  
\begin{notations}\label{n1}
Let $R$, $\bf m$, and $k$ be as in Notations \ref{nnotation}. Further assume that $R$ is a domain. Let $I$ be a nonzero homogeneous ideal in $R$. For a given choice of homogeneous generators of $I$, let the set of degrees of its generators be $d_1,\ldots, d_l$. By reindexing  we may assume that $d_1<\cdots <d_l$ and denote $d(I) = d_l$. The \emph{saturation} of $I$ is defined to be the graded ideal ${\widetilde{I}} = I\colon_R{\bf m}^{\infty} = \{f\in R\mid f\cdot\mathbf{m}^c \subset I\;\text{for some}\;c\in\mathbb{N}\}$.
\end{notations}

\begin{defn}\label{d1}
$\quad$
\begin{enumerate}
 \item Let $f_n\colon\R_{\geq 0}\to \R_{\geq 0}$ be the function given by
 $$f_n(x) = \dfrac{\ell_k \big(\left(I^n\right)_{\lfloor xn\rfloor}\big)}{n^{d-1}/d!}.$$
The \emph{adic density function} of $I$ is the function $f_{\{I^n\}} \colon \mathbb{R}_{\geq 0}\to \mathbb{R}_{\geq 0}$ defined by
$$f_{\{I^n\}}(x) = \limsup_{n\to\infty}
f_n(x).$$
\item Let ${g_n}\colon\R_{\geq 0}\to \R_{\geq 0}$ be the function given by
$$g_n(x) = \dfrac{\ell_k \big(\left(I^n\colon_R \mathbf{m}^{\infty}\right)_{\lfloor xn\rfloor}\big)}{n^{d-1}/d!}.$$
The \emph{saturation density function} of $I$ is the function $f_{\{\widetilde{I^n}\}} \colon \mathbb{R}_{\geq 0}\to \mathbb{R}_{\geq 0}$ defined by
$$f_{\{\widetilde{I^n}\}}(x) = \limsup_{n\to\infty}{g_n}(x).$$
\item Let ${f_n(\varepsilon)}:\R_{\geq 0}\to \R_{\geq 0}$ be the function given by
$$f_n(\varepsilon)(x) = \frac{\ell_k\big(\left(H^0_{\bf m}(R/I^n)\right)_{\lfloor xn \rfloor}\big)}{n^{d-1}/d!}.$$
The \emph{epsilon density ($\varepsilon$-density) function} of $I$ is the function $f_{\varepsilon(I)}\colon \mathbb{R}_{\geq 0}\to \mathbb{R}_{\geq 0}$ defined by
$$f_{\varepsilon(I)}(x) = \limsup_{n\to\infty} f_n(\varepsilon)(x).$$
\end{enumerate}
\end{defn}

We shall now list the properties of these  density functions.

\begin{thm}\label{2.3}\cite[Theorem $4.4$]{DRT24} Let $R$ and $I$ be as in Notations \ref{n1}. Then the following statements are true.
\begin{enumerate}
 \item[$(i)$] The sequence $\{f_n\}_{n\in \N}$ converges (locally uniformly) to $f_{\{I^n\}}$ on the set $\R_{\geq 0}\setminus \{d_1\}$.
 \item[$(ii)$] Further, $$f_{\{I^n\}}(x) = \begin{cases}
                                      0 & \forall x\in [0, d_1),\\
                                      {\bf p}_1(x) & \forall x\in (d_1, d_2],\\
                                      {\bf p}_j(x) & \forall x\in [d_j, d_{j+1}]~~\mbox{and}~~j=2, \ldots, l,
                                     \end{cases}$$
  where $d_{l+1} = \infty$, and for each $j=1,\ldots, l$, ${\bf p}_j(x)$ is a nonzero polynomial of degree $\leq d-1$ with rational coefficients. Moreover, ${\bf p}_l(x)$ has degree $d-1$.
  \item[$(iii)$] For any real number  $c>0$, we have $$\int_0^c f_{\{I^n\}}(x)dx = \lim_{n\to \infty} \frac{\sum_{m=0}^{\lfloor cn\rfloor}\ell_k\left(\left(I^n\right)_m\right)}{n^d/d!}.$$
\end{enumerate}
\end{thm}
 
Next we recall the relevant notations to list the properties of the saturation density functions. Consider the projective variety $V = \mathrm{Proj}\;R$ with a very ample invertible sheaf $\mathcal{O}_V(1)$. Let $\mathcal{I}$ be the ideal sheaf associated to $I$ on $V$. Let $$\pi \colon X =  {\mathbf{Proj}} \left(\oplus_{n\geq 0}\mathcal{I}^n\right) \longto V$$ be the blow up of $V$ along $\mathcal{I}$. Then $\mathcal{I}$ becomes locally principal on $X$, i.e., there is an effective Cartier divisor $E$ on $X$ (namely the exceptional divisor of $\pi$) such that $\mathcal{I}\mathcal{O}_X = \mathcal{O}_{X}(-E)$. Let $H$ be the pullback of a hyperplane section on $V$. Define the constants
$$\alpha_{I} = \min\{x\in \mathbb{R}_{\geq 0}\mid xH-E~~~~\mbox{is pseudoeffective}\}\quad \text{and}\quad
   \beta_I = \min\{x\in \mathbb{R}_{\geq 0}\mid xH-E~~~~\mbox{is nef}\},
$$
where we have $0\leq \alpha_I \leq d_1$ and $\beta_I \leq d_l=d(I)$. If $I$ is not an ideal of finite colength then $E$ is an effective divisor of positive degree which implies that $\alpha_I >0$.

In view of the results in \cite[Section 5 and Remark 5.22]{DRT24}, we get the following theorem.

\begin{thm}\label{mainsat}
Let $R$ and $I$ be as in Notations \ref{n1}. Then the following statements are true:
 \begin{enumerate}
 \item[$(i)$] The sequence $\{g_n\}_{n\in \N}$ converges (locally uniformly) to $f_{\{\widetilde{I^n}\}}$ on the set $\R_{\geq 0}$. Furthermore,
 $$f_{\{\widetilde{I^n}\}}(x) = d\cdot\mathrm{vol}_X(xH-E).$$
 \item[$(ii)$] The function $f_{\{\widetilde{I^n}\}} \colon \mathbb{R}_{\geq 0} \to \mathbb{R}_{\geq 0}$ is continuous.
 \item[$(iii)$] We have $f_{\{\widetilde{I^n}\}}(x) = 0$ for all $x \leq \alpha_I$, and $f_{\{\widetilde{I^n}\}}$ is a strictly increasing continuously differentiable function on the interval $(\alpha_I, \infty)$.
 \item[$(iv)$] If $x\geq \beta_I$, then $$f_{\{\widetilde{I^n}\}}(x) = d\cdot(xH-E)^{d-1} = \sum_{i=0}^{d-1}(-1)^{d-1-i}\dfrac{d!}{(d-i-1)!i!}(H^{i}\cdot E^{d-1-i})x^{i},$$ where $H^{d-1-i}\cdot E^i$ denotes the intersection number of the Cartier divisors $H^{d-1-i}$ and $E^i$.
 \item[$(v)$] For any real number  $c>0$, $$\int_0^c f_{\{\widetilde{I^n}\}}(x)dx = \lim_{n\to \infty} \frac{\sum_{m=0}^{\lfloor cn\rfloor}\ell_k\big(\big(\widetilde{I^n}\big)_m\big)}{n^d/d!}.$$
 \item[$(vi)$] We have $f_{\{\widetilde{I^n}\}}(x) = f_{\{I^n\}}(x)$ for all $x>d(I)$.
 \item[$(vii)$] If $I$ is of finite colength then $f_{\{\widetilde{I^n}\}}(x) = d\cdot e(R)x^{d-1}$, where
 $e(R)$ denotes the Hilbert-Samuel multiplicity of $R$.
 Here, $\alpha_I = \beta_I = 0$.
 \end{enumerate}
 \end{thm}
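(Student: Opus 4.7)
The plan is to realize the saturation density function as a scaled asymptotic volume function on the blow-up $\pi\colon X \to V = \mathrm{Proj}\;R$. The starting observation is that for each fixed $n\in \N$, the graded component $(\widetilde{I^n})_m$ coincides with the global sections $H^0(V, \sI^n(m))$ once $m$ is sufficiently large (depending on $n$). Using $\sI\sO_X = \sO_X(-E)$ together with the projection formula and Serre vanishing for $R^i\pi_*\sO_X(mH - nE)$ in high degree, one obtains
\[
\ell_k\big((\widetilde{I^n})_m\big) = h^0\big(X, \sO_X(mH - nE)\big) \qquad \text{for } m \gg 0.
\]
Setting $m = \lfloor xn\rfloor$, normalizing by $n^{d-1}/d!$, and invoking homogeneity of degree $d-1$ together with continuity of $\mathrm{vol}_X$ on $N^1(X)_\R$, I would identify $\lim_n g_n(x)$ with $d\cdot\mathrm{vol}_X(xH - E)$, yielding statement $(i)$; the local uniformity of convergence can be extracted from the uniform continuity of $\mathrm{vol}_X$ on compact subsets of $N^1(X)_\R$.

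Statements $(ii)$--$(iv)$ would then follow directly from the standard theory of volume functions. Continuity of $\mathrm{vol}_X$ on $N^1(X)_\R$ gives $(ii)$. The vanishing criterion $\mathrm{vol}_X(D)=0 \iff D$ is not big, combined with the $C^1$-smoothness of $\mathrm{vol}_X$ inside the big cone (Boucksom-Favre-Jonsson, or as used in \cite[Theorem A]{FKL16}), yields the vanishing for $x\leq \alpha_I$ and the continuous differentiability for $x > \alpha_I$; strict monotonicity on $(\alpha_I, \infty)$ follows because $x'H - E = (xH - E) + (x' - x)H$ is obtained by adding an ample class, which strictly increases the volume. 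On the nef cone the volume equals the top self-intersection, so for $x\geq \beta_I$ one has $\mathrm{vol}_X(xH - E) = (xH - E)^{d-1}$, and a binomial expansion gives $(iv)$. Item $(v)$ follows from dominated convergence applied to the $g_n$: the Riemann sum on the right interprets as $\int_0^c g_n(x)\,dx$, and the uniform polynomial bound on $g_n$ over $[0,c]$ (coming from the control of generating degrees as in Theorem \ref{2.3}) justifies the exchange of limit and integral. For $(vi)$ I would compare with Theorem \ref{2.3}: by Kodiyalam's theorem, $\mathrm{reg}(I^n)$ is eventually linear in $n$ with slope $d(I)$, so there is a constant $C$ with $(I^n)_m = (\widetilde{I^n})_m$ for $m \geq d(I)\,n + C$; hence $f_n(x) = g_n(x)$ whenever $\lfloor xn\rfloor \geq d(I)n + C$, and passing to the limit forces $f_{\{\widetilde{I^n}\}}(x) = f_{\{I^n\}}(x)$ on $(d(I),\infty)$.

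The main technical obstacle is removing the algebraic closure hypothesis used in \cite[Section 5]{DRT24}. The argument there invoked Lazarsfeld's Newton-Okounkov body framework, which is classically set up over an algebraically closed field. To handle an arbitrary ground field $k$, I would appeal to Cutkosky's results \cite[Theorem 10.7]{Cut14} and \cite[Sections 2 and 5]{Cut15}, where the theory of asymptotic invariants of graded linear series, volumes of coherent sheaves, and their intersection-theoretic interpretation on the nef cone is developed in full generality. Every other component of the argument above---the sheaf computation, the regularity comparison, and the Lebesgue integration step---is characteristic-free and insensitive to the closure of $k$, so this substitution completes the proof.
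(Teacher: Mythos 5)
Your proposal is correct and takes essentially the same route as the paper, which simply cites \cite[Section~5]{DRT24} together with Cutkosky's results \cite[Theorem~10.7]{Cut14} and \cite[Sections 2 and 5]{Cut15} to drop the algebraic closure hypothesis; your reconstruction of the volume-function identification, the Serre-vanishing/regularity comparison, dominated convergence, and the Kodiyalam/Cutkosky--Herzog--Trung linearity of $\mathrm{reg}(I^n)$ is exactly what underlies that reference. One small slip in your justification of the strict monotonicity in $(iii)$: $H=\pi^*\sO_V(1)$ is big and nef but not ample on the blow-up $X$; the conclusion still holds because adding a big and nef class to a big class strictly increases volume, e.g.\ by the log-concavity (superadditivity of $\mathrm{vol}^{1/(d-1)}$) of the volume function on the pseudoeffective cone.
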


Using the above two density functions one can prove the properties of the $\varepsilon$-density function leading to the invariant $\varepsilon$-multiplicity, see \cite[Theorems 5.5 and 5.19]{DRT24}.

We recall the notion of epsilon multiplicity, which  was introduced in the works of Kleiman, Ulrich and Validashti, see \cite{KUV} and \cite{UV08}, and defined as follows: if $(R, {\bf m})$ is a $d$-dimensional Noetherian local ring and $I$ is an ideal in $R$, then the \emph{epsilon multiplicity} $\varepsilon(I)$ of $I$ is defined as $$\varepsilon(I) = \limsup_{n\to \infty}\dfrac{\ell_R\left(H^0_{\bf m}(R/I^n)\right)}{n^d/d!},$$ where $\ell_R(-)$ denotes the length as an $R$-module.
In particular, if $I$ is ${\bf m}$-primary then it coincides with the usual multiplicity of $I$. Note that we can also write
$$\varepsilon(I) = \limsup_{n\to \infty}\frac{\ell_R\left(H^0_{\bf m}(R/I^n)\right)}{n^d/d!} =
\limsup_{n\to \infty}\frac{\ell_R\big({\widetilde{I^n}}/I^n\big)}{n^d/d!},$$
as by definition $H^0_{\bf m}(R/I^n) = {\widetilde{I^n}}/I^n$. It was shown by Cutkosky \cite{Cut14} that the `$\limsup$' in the definition can be replaced by `$\lim$' under mild conditions on the ring $R$.

\begin{thm}\label{epsilon}
Let $R$ and $I$ be as in Notations \ref{n1}. Then the following statements are true:
\begin{enumerate}
\item[$(i)$] The sequence $\{f_n(\varepsilon)\}_{n\in \N}$ converges (locally uniformly) to $f_{\varepsilon(I)}$ on the set $\R_{\geq 0}\setminus \{d_1\}$. Moreover, $$f_{\varepsilon(I)}(x) = f_{\{\widetilde {I^n}\}}(x)- f_{\{I^n\}}(x),\quad\mbox{for all}\;\; x\in \R_{\geq 0}.$$
 \item[$(ii)$] The function $f_{\varepsilon(I)}$ is continuous everywhere possibly except at $x=d_1$. Moreover, it is continuously differentiable outside the finite set $\{\alpha_I, d_1,\ldots, d_l\}$.
 \item[$(iii)$] The support of the function $f_{\varepsilon(I)}$ is contained in the closed interval $[\alpha_I, d(I)]$.
 \item[$(iv)$] The $\varepsilon$-multiplicity of $I$ is given by
 $$\varepsilon(I) =
 \int_0^{\infty} f_{\varepsilon(I)}(x)dx.$$
 \end{enumerate}
\end{thm}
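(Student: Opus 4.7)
The plan is to derive all four statements directly from the already established Theorems \ref{mainadic} and \ref{mainsat} via the pointwise identity $f_n(\varepsilon)(x)=g_n(x)-f_n(x)$, valid for every $n\in\N$ and every $x\in\R_{\geq 0}$. This is an immediate consequence of the local cohomology description $H^0_{\bf m}(R/I^n)=\widetilde{I^n}/I^n$ together with additivity of $k$-length on each graded piece of the short exact sequence $0\to I^n\to\widetilde{I^n}\to H^0_{\bf m}(R/I^n)\to 0$. Once this identity is in hand, the theorem becomes a statement about a difference of two density functions whose structure is under complete control.

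For part $(i)$ I combine the locally uniform convergences $f_n\to f_{\{I^n\}}$ on $\R_{\geq 0}\setminus\{d_1\}$ (Theorem \ref{mainadic}$(i)$) and $g_n\to f_{\{\widetilde{I^n}\}}$ on all of $\R_{\geq 0}$ (Theorem \ref{mainsat}$(i)$): their difference converges locally uniformly on $\R_{\geq 0}\setminus\{d_1\}$ to a genuine limit, so the limsup defining $f_{\varepsilon(I)}$ coincides there with $f_{\{\widetilde{I^n}\}}-f_{\{I^n\}}$. The isolated point $x=d_1$ is handled pointwise by using that $g_n(d_1)$ still has a true limit and that $\limsup_n f_n(d_1)={\bf p}_1(d_1)$ via Theorem \ref{mainadic}$(ii)$. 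Part $(ii)$ is a gluing of regularity: $f_{\{\widetilde{I^n}\}}$ is $C^1$ on $(\alpha_I,\infty)$ and vanishes on $[0,\alpha_I)$ by Theorem \ref{mainsat}$(ii)$--$(iii)$, while $f_{\{I^n\}}$ is a polynomial on each piece $[d_j,d_{j+1}]$ by Theorem \ref{mainadic}$(ii)$, so the difference is continuous off $\{d_1\}$ and $C^1$ off $\{\alpha_I,d_1,\ldots,d_l\}$. For part $(iii)$, the containment $I^n\subseteq\widetilde{I^n}$ gives $0\leq f_{\{I^n\}}\leq f_{\{\widetilde{I^n}\}}$, which together with $f_{\{\widetilde{I^n}\}}\equiv 0$ on $[0,\alpha_I]$ forces the difference to vanish there, and vanishing on $(d(I),\infty)$ is exactly Theorem \ref{mainsat}$(vi)$.

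Part $(iv)$ is the analytic heart of the argument and I expect it to be the main obstacle, since it requires exchanging a limsup with an integral. The plan is to begin with the step-function computation
$$\int_0^{\infty} f_n(\varepsilon)(x)\,dx=\frac{\ell_R\big(H^0_{\bf m}(R/I^n)\big)}{n^d/d!},$$
whose limsup in $n$ is $\varepsilon(I)$ by definition. To identify this with $\int_0^{\infty}f_{\varepsilon(I)}(x)\,dx$, I would invoke Lebesgue's dominated convergence theorem: part $(i)$ supplies pointwise convergence $f_n(\varepsilon)\to f_{\varepsilon(I)}$ almost everywhere, the exceptional point $d_1$ being Lebesgue null. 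The delicate technical ingredient is a uniform integrable dominating function, and this is where I expect most of the work. Concretely, I would use a Kodiyalam/Trung--Wang type regularity bound of the form $(\widetilde{I^n})_m=(I^n)_m$ for every $m\geq d(I)n+b$ with $b$ independent of $n$, to confine the support of every $f_n(\varepsilon)$ inside the fixed compact set $[0,d(I)+b]$, and then bound $f_n(\varepsilon)\leq g_n$ uniformly on this compact interval using polynomial growth of the Hilbert function of $R$. Dominated convergence then upgrades the limsup to a genuine limit and produces the claimed integral formula.
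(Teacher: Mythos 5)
The paper does not supply its own proof of Theorem~\ref{epsilon}: it attributes the result to \cite[Theorems 5.5 and 5.19]{DRT24}, and the sentence preceding the theorem together with the introduction indicate precisely the strategy you adopt, namely to exploit the pointwise identity $f_n(\varepsilon)=g_n-f_n$ coming from the short exact sequence $0 \to I^n \to \widetilde{I^n} \to H^0_{\bf m}(R/I^n)\to 0$ and then read off the structural and integral properties from Theorems~\ref{mainadic} and~\ref{mainsat}. Your parts $(ii)$ and $(iii)$ are clean consequences of this decomposition, and for part $(iv)$ the ingredients you identify --- the Riemann-sum computation $\int_0^\infty f_n(\varepsilon)\,dx=\ell_R\left(H^0_{\bf m}(R/I^n)\right)/(n^d/d!)$, a linear regularity bound $(\widetilde{I^n})_m=(I^n)_m$ for $m\geq d(I)n+b$ to confine all supports to a fixed compact set, the domination $f_n(\varepsilon)\leq g_n$, and Lebesgue's dominated convergence --- are exactly what is needed and match the way the present paper itself invokes dominated convergence in the proof of Theorem~\ref{all_adic}. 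So the overall plan is sound and consistent with the cited source.

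The one genuine gap is your treatment of the exceptional point $x=d_1$ in part~$(i)$. You assert that $\limsup_n f_n(d_1)={\bf p}_1(d_1)$ "via Theorem~\ref{mainadic}$(ii)$", but that theorem only describes $f_{\{I^n\}}$ on $[0,d_1)\cup(d_1,\infty)$ and says nothing about the value at $d_1$. More importantly, even if the equality were granted it would not give the identity you want. Since $g_n(d_1)$ converges, one has $\limsup_n\bigl(g_n(d_1)-f_n(d_1)\bigr)=\lim_n g_n(d_1)-\liminf_n f_n(d_1)$, while $f_{\{\widetilde{I^n}\}}(d_1)-f_{\{I^n\}}(d_1)=\lim_n g_n(d_1)-\limsup_n f_n(d_1)$; these agree precisely when the full sequence $f_n(d_1)$ converges, which is a stronger assertion than anything in Theorem~\ref{mainadic} as quoted. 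The missing ingredient can be supplied directly: $(I^n)_{d_1n}=\bigl(\langle I_{d_1}\rangle^n\bigr)_{d_1n}$, since any product of $n$ homogeneous generators of $I$ of total degree $d_1n$ must involve only the degree-$d_1$ generators. Hence $\ell_k\bigl((I^n)_{d_1n}\bigr)$ is the Hilbert function of the finitely generated standard graded $k$-subalgebra generated by the degree-$d_1$ part of $I$, and is therefore eventually a polynomial in $n$ of degree $\leq d-1$, which gives convergence of $f_n(d_1)$ and closes the gap.
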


We further obtained that all the above density functions remain invariant upto integral closure.

\begin{thm}\cite[Theorem 6.2]{DRT24}\label{6.1}
Let $R$ and $I$ be as in Notations \ref{n1}. Let $J=\overline{I}$ be the integral closure of the ideal $I$ in $R$. Then the following statements are true:
\begin{enumerate}
\item[$(i)$] $f_{\{I^n\}}(x) = f_{\{J^n\}}(x)$ for all $x \in \R_{\geq 0}$.
\item[$(ii)$] $f_{\{\widetilde {I^n}\}}(x) = f_{\{\widetilde {J^n}\}}(x)$ for all $x\in \R_{\geq 0}$.
\item[$(iii)$] $f_{\varepsilon(I)}(x) = f_{\varepsilon(J)}(x)$ for all $x \in \R_{\geq 0}$.
\end{enumerate}
\end{thm}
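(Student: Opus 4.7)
\medskip

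\noindent\emph{Plan.} The strategy is to deduce all three equalities from a single two-sided sandwich on ideal powers, with part (iii) an immediate corollary of (i) and (ii) via the identity $f_{\varepsilon(I)}=f_{\{\widetilde{I^n}\}}-f_{\{I^n\}}$ of Theorem \ref{epsilon}(i). The key input is the standard fact that, because $R[It]$ is a finitely generated graded algebra over the field $k$, the integral closure $\bigoplus_{n\geq 0}\overline{I^n}\,t^n$ is a module-finite extension of $R[It]$. Bounding the $t$-degrees of a module-generating set by an integer $c$ yields $\overline{I^{n+c}}\subseteq I^n$ for every $n\geq 0$. Combined with $J^n\subseteq\overline{I^n}$ (from $J=\overline{I}$) and the trivial $I^n\subseteq J^n$, this produces
\[
J^{n+c}\subseteq I^n\subseteq J^n,\qquad \widetilde{J^{n+c}}\subseteq \widetilde{I^n}\subseteq \widetilde{J^n},\qquad \forall\,n\geq 0.
\]

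Applying $\ell_k\bigl((-)_{\lfloor xn\rfloor}\bigr)$ and dividing by $n^{d-1}/d!$ converts this sandwich into, for every $x\in\R_{\geq 0}$,
\[
\frac{(n+c)^{d-1}}{n^{d-1}}\, f_{n+c}^{J}\!\left(\frac{\lfloor xn\rfloor}{n+c}\right) \;\leq\; f_n^{I}(x) \;\leq\; f_n^{J}(x),
\]
where $f_n^{I}, f_n^{J}$ denote the approximants of Definition \ref{d1}(1) for $I$ and $J$; the analogous inequality holds for $g_n^{I}, g_n^{J}$ of Definition \ref{d1}(2). Since $(n+c)^{d-1}/n^{d-1}\to 1$ and $\lfloor xn\rfloor/(n+c)\to x$, the locally uniform convergence $f_n^{J}\to f_{\{J^n\}}$ on $\R_{\geq 0}\setminus\{d_1\}$ from Theorem \ref{mainadic}(i) pinches $f_n^{I}(x)$ to $f_{\{J^n\}}(x)$ for every $x\neq d_1$, proving (i) away from $d_1$. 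Because $f_{\{\widetilde{I^n}\}}$ is continuous everywhere (Theorem \ref{mainsat}(ii)) and the convergence $g_n^{J}\to f_{\{\widetilde{J^n}\}}$ is uniform on all of $\R_{\geq 0}$ (Theorem \ref{mainsat}(i)), the same pinching yields (ii) without exceptional points.

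The main obstacle is the point $x=d_1$ in (i): because $d_1(n+c)>\lfloor d_1n\rfloor$ for large $n$, one has $(J^{n+c})_{\lfloor d_1n\rfloor}=0$, so the lower bound above is vacuous. I would close this gap by a graded-integrality argument. First, comparing degrees in an integrality equation $v^k+a_1 v^{k-1}+\cdots+a_k=0$ (with $v\in\overline{I}$ homogeneous and $a_i\in I^i$) forces $d_1(J)=d_1(\overline{I})=d_1(I)=d_1$. Next, since every generator of $I^n$ (resp.\ $J^n$) has degree $\geq d_1$, only products of degree-$d_1$ generators contribute to the degree-$d_1n$ component, giving $(I^n)_{d_1n}=V_I^n$ and $(J^n)_{d_1n}=V_J^n$, where $V_I:=I_{d_1}\subseteq V_J:=J_{d_1}$. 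The homogeneous form of the integrality equation forces each $a_i$ to lie in $V_I^i$, so every $v\in V_J$ is integral over the graded subring $\bigoplus_n V_I^n\subseteq R$; hence $\bigoplus_n V_J^n$ is module-finite over $\bigoplus_n V_I^n$, and the quotient is annihilated by a nonzero element, so it is supported on a proper closed subscheme of $\mathrm{Proj}\bigl(\bigoplus_n V_I^n\bigr)$, whose Krull dimension is $\leq d-1$. Its Hilbert function therefore grows as $O(n^{d-2})=o(n^{d-1})$; after normalization by $n^{d-1}/d!$ the discrepancy vanishes in the limit, giving equality at $d_1$ and completing (i). Part (iii) then follows from (i), (ii), and Theorem \ref{epsilon}(i).
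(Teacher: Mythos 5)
This statement is cited from \cite{DRT24} rather than proved in the present paper, so there is no proof here to compare against; I will assess your argument on its own terms.

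Your Rees-algebra sandwich is the right mechanism. Module-finiteness of $\bigoplus_{n}\overline{I^{n}}t^{n}$ over $R[It]$ (valid here since $R$ is excellent) gives $\overline{I^{n+c}}\subseteq I^{n}$, and combined with $(\overline I)^{n+c}\subseteq\overline{I^{n+c}}$ and $I^n\subseteq J^n$ it yields the two-sided inclusions. The pinching then proves (ii) on all of $\mathbb{R}_{\geq 0}$ (since $g_n\to f_{\{\widetilde{J^n}\}}$ locally uniformly everywhere and the limit is continuous, Theorem \ref{mainsat}) and (i) on $\mathbb{R}_{\geq 0}\setminus\{d_1\}$; (iii) then follows from $f_{\varepsilon(\cdot)}=f_{\{\widetilde{\cdot^n}\}}-f_{\{\cdot^n\}}$ wherever both (i) and (ii) hold.

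The treatment of $x=d_1$ in (i) has a genuine gap. From finiteness of $k[V_J]$ over $k[V_I]$ you conclude the quotient is ``annihilated by a nonzero element,'' but that is equivalent to $\mathrm{Frac}\,k[V_I]=\mathrm{Frac}\,k[V_J]$, which integrality alone does not give. It can fail: in $R=k[x,y]$ take $I=(x^3,xy^2)$, so that $\operatorname{height}I=1$, $d_1=d(I)=3$, and $J=\overline I=x\mathfrak{m}^2=(x^3,x^2y,xy^2)$. Then $V_I=\langle x^3,xy^2\rangle$, $V_J=\langle x^3,x^2y,xy^2\rangle$, and $k[V_J]=k[V_I]\oplus k[V_I]\cdot x^2y$ is a free $k[V_I]$-module of rank two, with $x^2y\notin\mathrm{Frac}\,k[V_I]$ (its square $x^4y^2$ lies in $k[V_I]$, but $x^2y$ itself does not lie in the fraction field). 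Accordingly $\dim_k V_I^n=n+1$ while $\dim_k V_J^n=2n+1$, so with the $\limsup$ definition of Definition~\ref{d1} one computes $f_{\{I^n\}}(3)=2$ and $f_{\{J^n\}}(3)=4$. Thus not only does your step at $d_1$ break down, the pointwise equality in (i) (hence in (iii)) at $x=d_1$ fails for this example; the theorem should be read as holding on $\mathbb{R}_{\geq 0}\setminus\{d_1\}$, which is consistent with Theorem~\ref{mainadic}(i) asserting convergence of the approximants only away from $d_1$. Restrict (i) and (iii) to $x\neq d_1$ and drop the $x=d_1$ patch; everything else in your argument is correct.
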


\subsection{Some basic facts about integral closures of ideals and integral extensions}

For an ideal $I$ in a commutative ring $R$, an element $r\in R$ is {\em integral} over $I$ it satisfies a monic polynomial
$$x^n + r_1x^{n-1} + r_2x^{n-2} + \cdots + r_n = 0,\quad\mbox{where}\;\; n\geq 1 \;\;\mbox{and}\;\; r_i\in I^i.$$
The set of integral elements over $I$ is an ideal, and is called the integral closure ${\overline I}$ of $I$.

If $A\to B$ is a homomorphism of commutative rings then $A\to B$ is an {\em integral extension} if every element $x$ of $B$ is integral over $A$, {\em i.e.}, it satisfies a monic polynomial
$$x^n + a_1x^{n-1} + a_2x^{n-2} + \cdots + a_n = 0,\quad\mbox{where}\;\; n\geq 1 \;\;\mbox{and}\;\; a_i\in A.$$

Integral closure behaves well with respect to localizations and integral extensions: 

\begin{enumerate}
 \item (Localization) If $I$ is an ideal in a commutative ring $R$ and $S$ is a multiplicatively closed set in $R$ then $S^{-1}(\overline{I}) = {\overline{S^{-1}I}}$.
\item (Integral extension)   If $A\subseteq B$ is an integral extension of commutative rings, and $I$ is an ideal in $R$. Then
$$\overline{IB}\cap A = \overline{I},$$
see \cite[Proposition 1.6.1]{HS06}.
\end{enumerate}

Now if $X$ is a scheme with structure sheaf $\sO_X$ and a sheaf of ideals $\mathcal{I}$ in $\mathcal{O}_X$, then the localization property of the integral closure of an ideal gives a well defined notion of the integral closure ${\overline{\sI}}$ of ${\sI}$ in $\sO_X$, which is given as follows. On an open affine subset $U$ of $X$ we define $H^0(U, \overline{\mathcal{I}}) = {\overline{H^0(U, \mathcal{I})}}$, where ${\overline {H^0(U, {\sI})}}$ is the integral closure of $H^0(U, \sI)$ in $H^0(U, \sO_X)$. In particular, if $X=\mathrm{Spec}\;R$ is affine then $H^0(X, \overline{\mathcal{I}}) = \overline{I}$ is the integral closure of $I$ in $R$.

Let $R=\oplus_{m\geq 0}R_m$ be a standard graded Noetherian algebra over a field $R_0$, $I$ be a homogeneous ideal of $R$ such that $R_1\not\subseteq I$, and $\mathcal{I}$ be the ideal sheaf associated to $I$ on $V = \mathrm{Proj}\;R$. Then the ideal sheaf associated to the integral closure ${\overline I}$ is the ideal sheaf ${\overline{\mathcal{I}}}$ in $\sO_V$.

We elaborate on this further. Let $x_1, \ldots, x_n$ be a set of degree $1$ generators of $R$. Then $V$ has open affine covering $\{D_+(x_i) =\mbox{Spec}\;R_{(x_i)}\}$, where $R_{(x_i)}$ is the subring of $R_{x_i}$ consisting of degree $0$ elements. Now by construction $H^0(D_+(x_i), \sO_X) = R_{(x_i)}$ and $H^0(D_+(x_i), \sI)$ is the set of degree $0$ elements of $I_{x_i}$. Therefore, if $\mbox{sheaf}(\overline{I})$ is the sheaf corresponding to the homogeneous ideal $\overline{I}$ then $H^0(D_+(x_i), \mbox{sheaf}(\overline{I}))$ is the integral closure $\overline{ H^0(D_+(x_i), \sI)}$ of $H^0(D_+(x_i), \sI)$ in $R_{(x_i)}$. So the notation $\mbox{sheaf}(\overline{I}) = \overline{\sI}$ is well-defined.

We recall the following facts from \cite[Definition 9.6.2 and Remark 9.6.4]{Laz04b}.

\begin{enumerate}
\item Let $X$ be a normal variety and $\mathcal{I}\subseteq \mathcal{O}_X$ be a nonzero ideal sheaf. Let $\nu\colon X^+\to X$ be the normalization of the blow up of $X$ along $\mathcal{I}$. Let $E$ be the exceptional divisor of $\nu$ so that $\mathcal{IO}_{X^+} = \mathcal{O}_{X^+}(-E)$. Then $\nu_*\mathcal{O}_{X^+}(-E) = \overline{\mathcal{I}}$.
\item Further, if  $f:Y\longrightarrow X$ is  a proper birational map surjective map between normal varieties with the property that ${\mathcal{IO}}_Y = \mathcal{O}_Y(-E)$ for some effective Cartier divisor $E$ on $Y$. Then $f$ factors through $\nu$ and consequently $f_*{\mathcal{O}}_Y(-E) = {\overline{\mathcal{I}}}$.
\end{enumerate}
 
\section{Finiteness of \texorpdfstring{$\ell_R({\overline J}/{\overline I})$}{l(J/I)} and the equality of their saturation density functions}

\begin{notations}\label{n2}
Adopt Notations \ref{nnotation}. Further assume that $R$ is a domain. Let ${\overline{I}}$ (resp. ${\overline{J}}$) denote the integral closure of $I$ (resp. $J$) in the ring $R$. Let $\mathcal{I}$ (resp. $\mathcal{J}$) be the ideal sheaf associated to $I$ (resp.  $J$) on $V=\mathrm{Proj}\;R$. Let $\overline{\mathcal{I}}$ (resp. $\overline{\mathcal{J}}$) denote the integral closure of the ideal sheaf $\mathcal{I}$ (resp. $\mathcal{J}$) in $\mathcal{O}_V$.
\end{notations}

In this section we show that $\ell_R(\overline{J}/{\overline{I}})<\infty$ is equivalent to saying that the saturation density functions of $I$ and $J$ are the same. In fact, we prove a stronger statement that if the saturation density functions of $I$ and $J$ agree on any single integer $c>{\bf d}$ (which we know is same as the assertion that their adic density functions agree on that point) then $\ell_R(\overline{J}/{\overline{I}})<\infty$.

For this assertion we provide two proofs:
\begin{enumerate}
 \item An algebraic proof using a result due to 
 Simis-Ulrich-Vasconcelos \cite[Theorem 3.3]{SUV01}. This proof requires no assumptions on the ground field $k$.
\item A geometric proof using a result due to 
Fulger-Koll\'ar-Lehmann \cite[Theorem $A$]{FKL16}. Since here in the geometric proof we use $\R$-divisors, the equality of the adic functions at any real number $x>{\bf d}$ would be sufficient. However, we need to further assume that the underlying field $k$ is perfect.
\end{enumerate}

\subsection{An algebraic proof}\label{algprf}
We fix an integer $c > {\bf d}$ and denote 
\[A:= \oOplus_{n\geq 0}A_n = \oOplus_{n \geq 0} {(I^n)}_{cn} \quad \mbox{ and } \quad  B:=   \oOplus_{n\geq 0}B_n = \oOplus_{n \geq 0} {(J^n)}_{cn}.\]

Since $R$ is a standard graded $k$-algebra it is easy to check that both $A$ and $B$ are standard graded $k$-algebras, see \cite[Lemma 2.2(a)]{HT03}. Further it is known that $\dim A = d = \dim B$, see \cite[Lemma 2.2(b)]{HT03}. However, this can also be checked by using the density functions as follows.
$$\lim_{n\to \infty}\frac{\ell_k(A_n)}{n^{d-1}/d!} = \lim_{n\to \infty}\frac{\ell_k\left((I^n)_{\lfloor cn\rfloor}\right)}{n^{d-1}/d!} = f_{\{I^n\}}(c)\quad\mbox{and}\quad
\lim_{n\to \infty}\frac{\ell_k(B_n)}{n^{d-1}/d!} =\lim_{n\to \infty}\frac{\ell_k\left((J^n)_{\lfloor cn\rfloor}\right)}{n^{d-1}/d!} = f_{\{J^n\}}(c),$$
where both $f_{\{I^n\}}(c)$ and $f_{\{I^n\}}(c)$ are nonzero real numbers by Theorem \ref{2.3}. 

The next lemma follows from \cite[Theorem 3.3]{SUV01}, which was proved in a more general setup. Here we give a self-contained proof in our setup following the same line of arguments as in \cite{SUV01}.
 
\begin{lemma}\label{diag_intext}
Let $A$, $B$ and $c > {\bf d}$ be as in Subsection \ref{algprf}. If  $f_{\{I^n\}}(c) = f_{\{J^n\}}(c)$ then the canonical inclusion map $A\hookrightarrow B$ is an integral extension of rings.
\end{lemma}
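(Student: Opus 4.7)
The strategy splits into two pieces. First, translate the analytic hypothesis $f_{\{I^n\}}(c) = f_{\{J^n\}}(c)$ into an equality of Hilbert--Samuel multiplicities of the two standard graded $k$-domains $A$ and $B$. Second, prove integrality for a graded inclusion of standard graded $k$-domains of the same dimension and multiplicity, following the argument in \cite[Theorem 3.3]{SUV01}.

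For the translation, both $A$ and $B$ are standard graded $k$-domains of dimension $d$, so $\ell_k(A_n) = e(A)\,n^{d-1}/(d-1)! + O(n^{d-2})$, and likewise for $B$. Comparing with the definition of the adic density function,
\[ f_{\{I^n\}}(c) = \lim_{n\to\infty}\frac{\ell_k\bigl((I^n)_{cn}\bigr)}{n^{d-1}/d!} = \lim_{n\to\infty}\frac{\ell_k(A_n)}{n^{d-1}/d!} = d\cdot e(A), \]
and analogously $f_{\{J^n\}}(c) = d\cdot e(B)$. Hence the hypothesis is equivalent to the equality $e(A) = e(B)$ of multiplicities.

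For the integrality step, since $B$ is standard graded we have $B = k[B_1]$, so it suffices to show that every $b \in B_1$ is integral over $A$. Writing $H_A(t) = P_A(t)/(1-t)^d$ and $H_B(t) = P_B(t)/(1-t)^d$ for the Hilbert series, the equality $P_A(1) = e(A) = e(B) = P_B(1)$ forces $P_B(t) - P_A(t) = (1-t)Q(t)$ for some $Q(t) \in \Z[t]$, and consequently
\[ \ell_k(B_n) - \ell_k(A_n) = O(n^{d-2}). \]
For any $b \in B_1$ the intermediate graded $A$-subalgebra $A[b] \subseteq B$ therefore satisfies $\ell_k(A[b]_n) \leq \ell_k(B_n) = \ell_k(A_n) + O(n^{d-2})$.

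The main obstacle is converting this asymptotic bound into a concrete monic relation for $b$. The plan is to argue by contradiction: since $\dim A = \dim B$, the fraction field extension $\mathrm{Frac}(A) \subseteq \mathrm{Frac}(B)$ is algebraic, so $b$ satisfies a homogeneous equation $gb^N + f_1 b^{N-1} + \cdots + f_N = 0$ with $g, f_i \in A$ homogeneous of compatible degrees and $g \neq 0$. This relation is monic (i.e., $g \in k^\times$) exactly when $b$ is integral over $A$. If $b$ is not integral then $g$ has positive degree, and a careful dimension count using this non-monic relation shows that $\ell_k(A[b]_n)$ exceeds $\ell_k(A_n)$ by a term of order $\Theta(n^{d-1})$, contradicting the bound above. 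The technical heart of the proof is this Hilbert function comparison; it is the content of \cite[Theorem 3.3]{SUV01}, and will be reproduced here in the graded setting, where the explicit description $A_n = (I^n)_{cn}$, $B_n = (J^n)_{cn}$ inside $R$ keeps the estimates transparent.
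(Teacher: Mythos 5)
Your translation of the hypothesis $f_{\{I^n\}}(c)=f_{\{J^n\}}(c)$ into the equality $e(A)=e(B)$ of Hilbert--Samuel multiplicities is correct, and it is in substance the same first step the paper takes (the paper packages it as the vanishing of the relative multiplicity $e_1(A,B)=\lim_n \ell_k(B_n/A_n)/(n^{d-1}/(d-1)!)=\tfrac{1}{d}\left[f_{\{J^n\}}(c)-f_{\{I^n\}}(c)\right]$). From there, however, your route diverges from the paper's and, as written, has a genuine gap.

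The paper does not argue element-by-element. Following \cite[Theorem 3.3]{SUV01}, it forms the associated graded ring $G=\mathrm{gr}_{A_1B}(B)$ of the ideal $A_1B$ of $B$, uses the extended Rees algebra $\sR(A_1B)[T^{-1}]$ to see that $G$ is an \emph{equidimensional} standard graded $k$-algebra of dimension $d$, and then observes that the vanishing of $e_1(A,B)$ forces $\dim G/(0:_G B_1G)<d$. Equidimensionality of $G$ then places $B_1G$ in the nilradical, which gives $B_1\subseteq\sqrt{A_1B}$, hence $B_m\subseteq A_1B_{m-1}$ for some $m$, hence $B$ is a finite $A$-module. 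Your proposal replaces this with: fix $b\in B_1$, assume $b$ is not integral over $A$, take a homogeneous algebraic relation $gb^N+f_1b^{N-1}+\cdots+f_N=0$ with $g\notin k^\times$, and claim this forces $\ell_k(A[b]_n)-\ell_k(A_n)=\Theta(n^{d-1})$. That claim is exactly the hard part and you have not proved it; moreover, the mere existence of a non-monic relation cannot be the engine here, since non-monic relations exist even when $b$ \emph{is} integral (your parenthetical ``monic exactly when $b$ is integral'' is logically imprecise for the same reason — one needs \emph{some} monic relation, not that a given relation be monic). Turning ``$b$ not integral'' into a $\Theta(n^{d-1})$ growth discrepancy requires knowing that the relevant fiber or associated graded object is equidimensional of full dimension; that is precisely the structural input the paper supplies via $G$ and the extended Rees algebra, and it is absent from your sketch. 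Attributing the missing step to \cite[Theorem 3.3]{SUV01} as a black box does not resolve this, since (as the paper's rendition shows) that theorem is itself proved via the associated graded ring, not by the element-by-element dimension count you outline. To complete your proposal you would essentially have to reintroduce the $\mathrm{gr}_{A_1B}(B)$ argument, at which point you would be reproducing the paper's proof.
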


\begin{proof} Since $A_1B$ is a graded ideal in $B$ we can consider the Rees Algebra 
$\sR(A_1B) = B[(A_1B)T]$ and the associated graded ring $G= \mathrm{gr}_{(A_1B)}(B)$ of the ideal $A_1B$ in $B$.

Note that $G$ is a standard graded (with a suitable  grading) equidimensional ring of dimension $d$. This we can argue as follows.
The extended Rees algebra ${\sR} = \sR(A_1B)[T^{-1}]$ is an affine $k$-algebra which is an integral domain and $T^{-1}$ is a homogeneous nonzerodivisor. Since $G= \sR/T^{-1}\sR$, the catenary property of this ring implies that $G$ is an equidimensional ring of dimension $d$.

If we endow bigrading to $\sR$ by assigning bidegree $(1,0)$ to the elements of $B_1$ and bidegree $(0,1)$ to the elements of $A_1$, then it gives an $\N$-grading on $G$, namely the total grading. Now it is obvious that with respect to this grading $G$ is a standard graded $k$-algebra and is generated by degree one elements $B_1$ and $A_1B_0$.
 
In particular, the components of the finitely generated homogeneous ideal $B_1G$ has polynomial growth: $B_1G$ is a finitely generated graded $G/(0:_GB_1G)$-module. Let ${\tilde d}$ be the Krull dimension of $G/(0:_G B_1G)$. Then by the Hilbert-Serre theorem there is a polynomial $q(x)\in \Q[x]$ of degree ${\tilde d}-1\leq d-1$ such that $\ell_k(B_1G)_n = q(n)$ for all $n\gg 0$.

\vspace{5pt}

\noindent{\bf Claim}. The coefficient of $n^{d-1}$ in $q(x)$ is zero, equivalently ${\tilde d}<d$.

\vspace{5pt}

\noindent{Proof of the claim}.
By construction $$\ell_k\left((B_1G)_n\right) = \ell_k(B_n/A_n) = \ell_k((J^n)_{cn}/(I^n)_{cn}),$$ whereas by Theorem \ref{2.3} $(i)$,
$$f_{\{J^n\}}(c) = \lim_{n \to \infty}\frac{\ell_k \left((J^n)_{cn}\right)}{n^{d-1}/d!}\quad\mbox{and}\quad f_{\{I^n\}}(c) = \lim_{n \to \infty}\frac{\ell_k \left((I^n)_{cn}\right)}{n^{d-1}/d!}.$$
Therefore, if $e_1(A,B)$ denote $(d-1)!$ times the leading coefficient of $q(x)$ then
$$e_1(A,B) = \lim_{n \to \infty}\frac{\ell_k \left(B_n/A_n\right)}{n^{d-1}/(d-1)!} = \frac{1}{d}\left[f_{\{J^n\}}(c)-f_{\{I^n\}}(c)\right]=0.$$
Now ${\tilde d}-1< d-1$ implies that the ideal $(0:_GB_1G)$ has positive height. Since $G$ is equidimensional it implies that $B_1G$ is contained inside the nilradical of $G$ and therefore we have $B_1\subseteq \sqrt{A_1B}$. 
Then there is an $m$ such that $B_1^m\subseteq A_1B$.
Since $B$ is generated by degree $1$ elements, we have
$B_1^m = A_1B_{m-1} = A_1B_1^{m-1}$. In particular, $B$ is integral over $A$.
\end{proof}

\begin{propose}\label{sing_pt_adic}
Let $A$, $B$ and $c > {\bf d}$ be as in Subsection \ref{algprf}. If $f_{\{I^n\}}(c) = f_{\{J^n\}}(c)$ then $\ell_R(\overline{J}/\overline{I})< \infty$.
\end{propose}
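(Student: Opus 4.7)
The plan is to use Lemma~\ref{diag_intext} to show that the integral closure ideal sheaves $\overline{\mathcal{I}}$ and $\overline{\mathcal{J}}$ coincide on $V = \mathrm{Proj}\;R$; once that is established, the finitely generated graded $R$-module $\overline{J}/\overline{I}$ will sheafify to zero on $V$, hence be supported only at ${\bf m}$, hence have finite length.

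First, I would translate the integrality $A \hookrightarrow B$ into a statement inside $R$. Pick any $y \in B_1 = (J)_c$ and write a homogeneous integral relation $y^n + a_1 y^{n-1} + \cdots + a_n = 0$; by homogeneity in the $\mathbb{N}$-grading of $B$ we may arrange $a_i \in A_i = (I^i)_{ci}$. Since each $a_i$ lies in $I^i$ as an ideal of $R$, this is precisely the defining equation for $y \in \overline{I}$. Hence $(J)_c \subseteq \overline{I}$.

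Next, I would leverage the generation-degree hypothesis $c > {\bf d} \geq d(J)$: because $R$ is standard graded and $J$ is generated in degrees $\leq d(J) \leq c$, one checks that $(J)_m = R_{m-c}\cdot (J)_c$ for every $m \geq c$. Therefore the ideal $K := (J)_c \cdot R \subseteq \overline{I}$ agrees with $J$ in all degrees $\geq c$, so $J/K$ is ${\bf m}$-torsion and $\mathcal{K} = \mathcal{J}$ as ideal sheaves on $V$. Since the integral closure of an ideal sheaf depends only on the sheaf, the containment $\mathcal{K} \subseteq \overline{\mathcal{I}}$ yields $\overline{\mathcal{J}} = \overline{\mathcal{K}} \subseteq \overline{\mathcal{I}}$, and the opposite inclusion $\overline{\mathcal{I}} \subseteq \overline{\mathcal{J}}$ is automatic from $I \subseteq J$, giving $\overline{\mathcal{I}} = \overline{\mathcal{J}}$.

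Finally, sheafifying the short exact sequence $0 \to \overline{I} \to \overline{J} \to \overline{J}/\overline{I} \to 0$ produces $0 \to \overline{\mathcal{I}} \to \overline{\mathcal{J}} \to 0$, so the sheafification of $\overline{J}/\overline{I}$ on $V$ vanishes. Consequently $\overline{J}/\overline{I}$ is annihilated by some power of ${\bf m}$, and being finitely generated as a graded $R$-module, it has finite length. I expect the main obstacle to be the middle step: without the strict inequality $c > {\bf d}$, one cannot recover the full ideal sheaf $\mathcal{J}$ from the single graded piece $(J)_c$, so the generation-degree bound is what lets the pointwise containment $(J)_c \subseteq \overline{I}$ be promoted to the sheaf equality $\overline{\mathcal{I}} = \overline{\mathcal{J}}$.
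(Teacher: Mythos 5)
Your proof is correct, and it hinges on the same key algebraic input as the paper's, namely Lemma~\ref{diag_intext} (that $A \hookrightarrow B$ is an integral extension). Both arguments start the same way: a homogeneous element of $(J)_c$ is integral over $B$, hence over $A$, and chasing degrees in a homogeneous integral equation places it in $\overline{I}$, giving $(J)_c \subseteq \overline{I}$ (the paper proves the slightly stronger $(\overline{J})_c = (\overline{I})_c$ by the same mechanism). Where you diverge is the finish. The paper stays inside $R$: it shows $J_m \subseteq \overline{I}$ for every $m \geq c$ by redistributing degree-one factors among the generators of $J$, then uses the inclusion $(\overline{J})_m \subseteq \overline{\oplus_{i \geq m} J_i}$ to conclude that $\overline{J}/\overline{I}$ lives in finitely many degrees. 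You instead pass to $\mathrm{Proj}\,R$: from $(J)_c \subseteq \overline{I}$ and $K := (J)_c\cdot R$ agreeing with $J$ in degrees $\geq c$, you get $\mathcal{K}=\mathcal{J}$, hence $\overline{\mathcal{J}}=\overline{\mathcal{K}}\subseteq \overline{\mathcal{I}}$, hence $\overline{\mathcal{I}}=\overline{\mathcal{J}}$, and then the sheafification of $\overline{J}/\overline{I}$ vanishes, so it is an $\mathbf{m}$-torsion finitely generated graded module and has finite length. This sheaf-theoretic finish is close in spirit to the implication $(iv)\Rightarrow(iii)$ of the paper's Theorem~\ref{impsat}, but it is self-contained and avoids the paper's ancillary verification that $(\overline{J})_m \subseteq \overline{\oplus_{i\geq m}J_i}$. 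A small remark: once you have $K_m = J_m \subseteq \overline{I}$ for all $m \geq c$, you are already at the point where the paper's purely algebraic conclusion would also go through; the sheaf detour is not necessary, but it is clean and uses only the facts about $\overline{\mathcal{I}}$ that the paper records in the preliminaries.
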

\begin{proof} 
First we show that $(\overline{J})_c=(\overline{I})_c$. Let $r \in (\overline{J})_c$. Then there exists an integer $t>0$ and elements $b_1, \ldots, b_t$ in $R$ such that $b_i \in J^i$ for all $i=1,\ldots,t$ and
\begin{equation}\label{integral_eq}
r^t+b_1r^{t-1}+\cdots+b_{t-1}r+b_t=0.
\end{equation}
We may assume that all the $b_i$'s are homogeneous. Then $\deg b_i=tc-(t-i)c=ci$ for $i=1,\ldots, t$ and hence $b_i \in {(J^i)}_{ci}=B_i \subseteq B$. Hence $r$ is integral over $B$ and therefore by Lemma \ref{diag_intext}, $r$ is integral over the ring $A$. In particular, there exists an integer $s>0$ and (homogeneous) elements $a_1, \ldots, a_s$ in $A$ such that
\[r^s+a_1r^{s-1}+\cdots+a_{s-1}r+a_s=0.\]
Clearly, $\deg a_i=sc-(s-i)c=ci$. So $a_i \in (I^i)_{ci} = A_i \subset A$. Consequently, $r \in \overline{I} \cap R_c=(\overline{I})_c$.

By \eqref{integral_eq} we know that $(\overline{J})_m \subseteq \overline{\oplus_{i\geq m}J_i}$. Therefore, to show $\ell_R(\overline{J}/\overline{I})<\infty$, it is enough to show that $J_m \subseteq {\overline I}$ for all integers $m\geq c$.

Let $y_1, \ldots, y_s$ be a set of homogeneous generators of $J$ such that $e_j=\deg y_j \leq c$. Since $R$ is standard graded we have $y_jR_{c-e_j} \subseteq (J)_c\subseteq (\overline{J})_c = (\overline{I})_c$. Therefore, $y_jR_m \subseteq {\overline{I}}$ for all $m\geq c-e_j$.

Now let  $y\in J_m$ where $m\geq c$. Then $y$ is a sum of elements of the type $ry_1^{i_1}\cdots y_s^{i_s}$, where $r\in R_{m-\sum_ii_je_j}$ and one of the exponents, say, $i_1\geq 1$. Then $ry_1^{i_1}\cdots y_s^{i_s} = y_1(ry_1^{i_1-1}\cdots y_s^{i_s})\in y_1R_{m-e_1}\subseteq {\overline{I}}$. This proves that $J_m\subseteq {\overline{I}}$ for all $m\geq c$.
\end{proof}

\subsection{A geometric proof}

Before we start with a geometric proof we recall  the following geometric interpretation of the finiteness of $\ell_R({\overline J}/{\overline I})$.

\begin{lemma}\label{saturation}
Let $R$, $I$ and $J$ be as in Notations \ref{n2}. Then the following statements are equivalent:
\begin{enumerate}
 \item[$(i)$]  $\ell_R({\overline{J}}/{\overline{I}}) <\infty$.
 \item[$(ii)$] $\overline{\mathcal{I}} = \overline{\mathcal{J}}$.
 \end{enumerate}
\end{lemma}

\begin{proof}
\noindent{$(ii)\implies (i)$}. Since $R$ is a domain, $H^0_{\bf m}(R) = 0$ and there is an integer $m_1$ such that $(H^1_{\bf m}(R))_m=0$ for all integers $m\geq m_1$. Therefore for all $m\geq m_1$, we have
$$H^0(V, \overline{\mathcal{I}}(m)) = (\overline{I}\colon_R \mathbf{m}^{\infty})_m \quad\mbox{and}\quad H^0(V, \overline{\mathcal{J}}(m)) = (\overline{J}\colon_R \mathbf{m}^{\infty})_m.$$
Now $(i)$ follows because 
$$\ell_R\left(\frac{\overline{J}}{\overline{I}}\right) \leq \ell_R\left(\frac{\overline{J}\colon_R\mathbf{m}^{\infty}}{{\overline{I}}}\right)
= \ell_R\left(\frac{\overline{J}\colon_R \mathbf{m}^{\infty}}{\overline{I}\colon_R \mathbf{m}^{\infty}}\right) +\ell_R\left(\frac{\overline{I}\colon_R \mathbf{m}^{\infty}}{\overline{I}}\right) = 
\ell_R\left(\frac{\overline{I}\colon_R \mathbf{m}^{\infty}}{\overline{I}}\right) <\infty.$$
  
\vspace{5pt}
\noindent{$(i)\implies (ii)$}. We have $\overline{I}\colon_R {\bf m}^{\infty} = \overline{J}\colon_R {\bf m}^{\infty}$. It is enough to show that $\overline{\sJ} \subseteq \overline{\sI}$. Let $h_1, \ldots, h_s$ be a set of homogeneous generators of $\overline{J}$, where $\deg h_j = e_j$. Let $x_1,\ldots x_n$ be a set of degree one generators of $R$. Then $\cup_{i=1}^n D_+(x_i)$ is an open affine covering of $V=\mathrm{Proj}\;R$ and $H^0(D_+(x_i), \overline{\sJ})$ is generated as an $H^0(D_+(x_i), \sO_V)$-module by the elements $h_1/x_i^{e_1},\ldots,h_s/x_i^{e_s}$. But for a given $j$ there is an $m_j$ such that $h_jx_i^{m_j}\in \overline{I}$, which gives
$h_j/x_i^{e_j} = (h_jx_i^{m_j})/(x_i^{m_j+e_j}) \in H^0(D_+(x_i), \overline{\sI})$.
Since $\overline{\sI}$ and $\overline{\sJ}$ are determined by the sections on any given open affine cover of $V$, we have $\overline {\sJ}\subseteq \overline{\sI}$.
\end{proof}

\vspace{5pt}

Now we recall the following result of Fulger-Koll\'ar-Lehmann.

\begin{thm}\cite[Theorem A]{FKL16}
Let $X$ be a proper, normal algebraic variety over a perfect field $k$. Let $D$ be a big nef Cartier $\R$-divisor and $E$ be an effective Cartier $\R$-divisor on $X$. Then
$$\mathrm{vol}_X(D-E) = \mathrm{vol}_X(D) \iff \mbox{the divisor}\; E=0.$$
\end{thm}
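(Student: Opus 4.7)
The backward implication is immediate, so the real content is the forward direction: assuming $\mathrm{vol}_X(D-E)=\mathrm{vol}_X(D)$, I would deduce $E=0$ by combining the differentiability of the volume function with positivity of intersections against movable cycles.

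The key input I plan to invoke is the differentiability theorem of Boucksom-Favre-Jonsson (and Lazarsfeld-Mustaţă): $\mathrm{vol}_X$ is $C^1$ on the open cone of big classes in $N^1(X)_{\R}$, with directional derivative
$$\tfrac{d}{dt}\mathrm{vol}_X(L+tF)\big|_{t=0} \;=\; n\,\langle L^{n-1}\rangle\cdot F,$$
where $\langle L^{n-1}\rangle$ is the positive intersection product (a movable curve class) and $n=\dim X$. When $L$ is big and nef, $\langle L^{n-1}\rangle$ numerically agrees with $L^{n-1}$. The equality $\mathrm{vol}_X(D-E)=\mathrm{vol}_X(D)>0$ combined with the concavity of $\mathrm{vol}_X^{1/n}$ on the pseudoeffective cone forces $\mathrm{vol}_X(D-tE)=\mathrm{vol}_X(D)$ for all $t\in[0,1]$, so in particular each $D-tE$ lies in the big cone. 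Integrating the differentiability formula along this constant segment gives
$$0 \;=\; \mathrm{vol}_X(D)-\mathrm{vol}_X(D-E) \;=\; n\int_0^1 \langle(D-tE)^{n-1}\rangle\cdot E\,dt,$$
and the integrand is pointwise nonnegative since the positive product sits in the movable cone and $E$ is effective. By continuity it must vanish identically on $[0,1]$; evaluating at $t=0$ in particular yields $D^{n-1}\cdot E=0$.

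The hard part will be deducing $E=0$ from the vanishing of $\langle(D-tE)^{n-1}\rangle\cdot E$ across the whole interval. If $D$ were ample this would be automatic, since any nonzero effective $F$ satisfies $D^{n-1}\cdot F>0$. For merely big-and-nef $D$, components of $E$ lying in the null locus of $D$ can pair trivially with $D^{n-1}$, so I would have to exploit the vanishing along the entire segment: as $t$ grows the positive product $\langle(D-tE)^{n-1}\rangle$ moves within the movable cone, and one must argue that no nonzero effective divisor can be numerically orthogonal to every member of this one-parameter family. After a reduction from proper to projective via Chow's lemma---with the perfectness of $k$ ensuring the required geometric reductions remain valid---this step becomes the null-locus analysis of \cite{FKL16} and constitutes the most technically demanding part of the argument.
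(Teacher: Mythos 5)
The statement you were asked to prove is not actually proved in the paper: the authors simply cite it as Theorem A of \cite{FKL16} and use it as a black box in the geometric proof of Proposition \ref{impsat1}. So there is no internal argument to compare against, and the relevant question is whether your sketch would stand on its own.

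Your reduction steps are correct as far as they go. Effectivity of $E$ plus log-concavity of $\mathrm{vol}^{1/n}$ does force $\mathrm{vol}_X(D-tE)=\mathrm{vol}_X(D)$ for all $t\in[0,1]$, and differentiating a constant function gives $\langle (D-tE)^{n-1}\rangle\cdot E=0$ on the whole segment; the identification $\langle D^{n-1}\rangle = D^{n-1}$ for $D$ nef is also standard. But two genuine gaps remain. First, you invoke the Boucksom--Favre--Jonsson / Lazarsfeld--Musta\c{t}\u{a} $C^1$-differentiability theorem and the positive-product formula for the derivative; those results are established over $\mathbb{C}$ (or algebraically closed fields where resolution of singularities and Fujita approximation are available), whereas the statement here is over an arbitrary perfect field, which in positive characteristic is exactly the regime where such tools must be rebuilt. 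Extending the differentiability and positive-product machinery to normal proper varieties over perfect fields is in fact a large part of what \cite{FKL16} accomplish, so using it as a black-box input here is close to circular. Second, and you concede this yourself, the crucial final step --- that no nonzero effective $E$ can be numerically orthogonal to $\langle(D-tE)^{n-1}\rangle$ for all $t\in[0,1]$ when $D$ is merely big and nef --- is explicitly left as ``the null-locus analysis of \cite{FKL16}''. That is precisely the nontrivial content of the theorem (it is automatic for ample $D$), so delegating it back to the source paper means the proposal does not constitute a proof. To make this self-contained you would need either to carry out the null-locus / Nakayama--Zariski decomposition analysis directly, or to supply an argument that bypasses differentiability (for instance via Okounkov bodies or restricted volumes) that is already known to work over perfect fields of arbitrary characteristic.
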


\begin{propose}\label{impsat1}
Adopt Notations \ref{n2} and further assume that the field $k$ is perfect. If $f_{\{I^n\}}(x) = f_{\{{J^n}\}}(x)$ for some real number $x>\bf d$ then $\overline{\mathcal{I}} = \overline{\mathcal{J}}$.
\end{propose}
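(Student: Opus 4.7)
The plan is to translate the equality of density functions at $x>\mathbf{d}$ into an equality of volumes on a common birational model, and then invoke the rigidity theorem of Fulger--Koll\'ar--Lehmann quoted just above. First I would pick a normal proper variety $X$ with a proper birational morphism $\pi\colon X\to V$ on which both $\mathcal{I}\mathcal{O}_X$ and $\mathcal{J}\mathcal{O}_X$ are invertible --- for instance, normalize the composite obtained by blowing up $V$ along $\mathcal{I}$ and then along the pullback of $\mathcal{J}$. Write $\mathcal{I}\mathcal{O}_X=\mathcal{O}_X(-E_I)$ and $\mathcal{J}\mathcal{O}_X=\mathcal{O}_X(-E_J)$ and let $H$ denote the pullback of the hyperplane class from $V$. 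Since $\mathcal{I}\subseteq \mathcal{J}$, the inclusion of line bundles $\mathcal{O}_X(-E_I)\subseteq \mathcal{O}_X(-E_J)$ forces $E:=E_I-E_J$ to be an effective Cartier divisor on $X$.

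By Theorem~\ref{mainsat}$(vi)$ the hypothesis converts into $f_{\{\widetilde{I^n}\}}(x)=f_{\{\widetilde{J^n}\}}(x)$, and, combining the birational invariance of volume with Theorem~\ref{mainsat}$(i)$, this pulls back to the common model as $\mathrm{vol}_X(xH-E_I)=\mathrm{vol}_X(xH-E_J)$. Setting $D:=xH-E_J$, I would next check that $D$ is a big nef $\R$-Cartier divisor on $X$: the inequality $\beta_J\leq d(J)\leq \mathbf{d}<x$ together with Theorem~\ref{mainsat}$(iv)$ makes $xH-E_J$ nef on the blow up of $\mathcal{J}$ and hence on $X$ after pullback, while $\alpha_J\leq d(J)\leq \mathbf{d}<x$ with Theorem~\ref{mainsat}$(iii)$ yields $\mathrm{vol}_X(D)=f_{\{\widetilde{J^n}\}}(x)/d>0$, so that $D$ is big.

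The volume equality now reads $\mathrm{vol}_X(D-E)=\mathrm{vol}_X(D)$ with $D$ big nef Cartier $\R$-divisor and $E$ effective Cartier, and the perfectness of $k$ allows the cited Fulger--Koll\'ar--Lehmann theorem to be applied; it forces $E=0$, equivalently $E_I=E_J$. Since $\pi$ is a proper birational surjection from a normal variety which factors through the normalized blow up of each ideal, item $(2)$ of the recalled facts from \cite{Laz04b} finally yields $\overline{\mathcal{I}}=\pi_*\mathcal{O}_X(-E_I)=\pi_*\mathcal{O}_X(-E_J)=\overline{\mathcal{J}}$. The main subtlety I foresee is the bookkeeping needed to transfer the nef-ness and volume identifications from Theorem~\ref{mainsat}, which are stated on the individual blow ups, to the common model $X$; once that standard birational invariance is in place, Fulger--Koll\'ar--Lehmann does all the real work.
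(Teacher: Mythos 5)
Your strategy matches the paper's: pass to a common normal birational model, identify the saturation density functions with volume functions, invoke the Fulger--Koll\'ar--Lehmann rigidity theorem to kill the effective difference divisor, and push forward. The bookkeeping of nef-ness and bigness via $\alpha_J, \beta_J \leq d(J) \leq \mathbf{d} < x$ is handled correctly, as is the reduction to $\mathrm{vol}_X(D - E) = \mathrm{vol}_X(D)$.

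There is, however, a genuine gap in the final step. You invoke item (2) of the recalled facts from \cite{Laz04b} to conclude $\overline{\mathcal{I}} = \pi_*\mathcal{O}_X(-E_I)$, but that statement is for a proper birational map \emph{between normal varieties}: it requires the target to be normal. Here $V = \mathrm{Proj}\,R$ is only a projective variety over $k$ with $R$ a domain; normality of $R$ (hence of $V$) is not assumed, and without it the sheaf $\pi_*\mathcal{O}_X(-E_I)$ need not even be a subsheaf of $\mathcal{O}_V$, let alone equal $\overline{\mathcal{I}}$. The paper's proof gets around this by routing the push-forward through the normalization $\eta\colon \overline{V} \to V$: the Lazarsfeld identity is applied on $\overline{V}$ (which is normal) to deduce $\overline{\mathcal{IO}_{\overline V}} = \overline{\mathcal{JO}_{\overline V}}$, and then the finiteness of $\eta$ together with the contraction property of integral closure under integral extensions ($\overline{IB}\cap A = \overline{I}$ for $A\subseteq B$ integral) yields $\overline{\mathcal{I}} = \overline{\mathcal{J}}$ on $V$ itself. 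Inserting this normalization step is the missing piece; the rest of your argument is sound.
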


\begin{proof}
Let $x\in \R$ such that $x>\mathbf{d}$ and $f_{\{\widetilde{I^n}\}}(x) = f_{\{\widetilde{J^n}\}}(x)$.
Let $$\varphi_I \colon Z_I = \mathrm{Bl}_{\mathcal{I}}(V) \longrightarrow V \quad \text{and}\quad \varphi_J \colon Z_J = \mathrm{Bl}_{\mathcal{J}}(V) \longrightarrow V $$ denote the blow-ups of $V$ along $\mathcal{I}$ and $\mathcal{J}$ with the exceptional divisors $F_I$ and $F_J$ respectively. Let $L_I$ and $L_J$ be the pullbacks of a hyperplane section on ${V}$ to $Z_I$ and $Z_J$ respectively. From Theorem \ref{mainsat}, we have
$$f_{\{\widetilde{I^n}\}}(x) = d\cdot\mathrm{vol}_{Z_I}(xL_I-F_I) \quad \text{and}\quad f_{\{\widetilde{J^n}\}}(x) = d\cdot\mathrm{vol}_{Z_J}(xL_J-F_J),$$
where the Cartier $\R$-divisors $xL_I-F_I$ and $xL_J-F_J$ are both big and nef, since $x>\bf d$.

Let $\varphi_{IJ}\colon Z_{IJ} =\mathrm{Bl}_{\mathcal{IJ}}({ V}) \longrightarrow { V}$ be the blow-up of ${ V}$ along $\mathcal{IJ}$. Then it is also the blow-up of $Z_J$ along $\mathcal{IO}_{Z_J}$ and the blow-up of $Z_I$ along $\mathcal{JO}_{Z_I}$, see \cite[\href{https://stacks.math.columbia.edu/tag/080A}{Lemma 010A}]{Sta24}. (In fact, it can be checked that the induced maps $\psi_J:Z_{IJ}\longrightarrow Z_J$ and $\psi_I:Z_{IJ}\longrightarrow Z_I$ are both finite morphisms. However, we do not need this observation here).

Let $\eta:{\overline V}\longrightarrow V$ be the normalization of $V$. Let $X^+$ denote  the normalization of the blow-up $\mathrm{Bl}_{\mathcal{IJO}_{\overline  V}}({ \overline V})\longrightarrow {\overline V}$ of ${\overline V}$ along the ideal sheaf $\mathcal{IJO}_{\overline V}$, and let  the canonical induced  map be denoted as $\nu\colon X^+  \longrightarrow {\overline V}.$
Therefore we have following commutative diagrams
{\small \[\xymatrix{& Z_{IJ} \ar[ld]_{\psi_I} \ar[rd]^{\psi_J}&\\
Z_I \ar[rd]_{\varphi_I} & & Z_J \ar[ld]^{\varphi_J}\\
& V &}\qquad \qquad \xymatrix{&X^+\ar[d]\ar[rdd]^\nu\ar[ldd]_{\overline{\eta}}&\\& \mathrm{Bl}_{\mathcal{IJO}_{\overline  V}}({ \overline V}) \ar[ld] \ar[rd]&\\
	Z_{IJ} \ar[r]_{\varphi_{IJ}} &  V & \overline{V} \ar[l]^{\eta}},\]}
where
the map $\eta\circ\nu:X^+\longrightarrow V$ factors through the canonical map ${\overline \eta}\colon X^+\longrightarrow Z_{IJ}$. Note that both $\mathcal{I}\mathcal{O}_{Z_{IJ}}$ and $\mathcal{J}\mathcal{O}_{Z_{IJ}}$ are locally principal on $Z_{IJ}$ and $\mathcal{I}\mathcal{O}_{Z_{IJ}} \subseteq \mathcal{J}\mathcal{O}_{Z_{IJ}}$. Therefore, $Z_{IJ}$ carries effective Cartier divisors $E_0$ and $E_J$ with $E_J = \psi_J^*F_J$ and $E_0+E_J = \psi_I^*F_I$
so  that 
$$\mathcal{J}\mathcal{O}_{Z_{IJ}} = \mathcal{O}_{Z_{IJ}}(-E_J) \quad\text{and}\quad \mathcal{I}\mathcal{O}_{Z_{IJ}} = \mathcal{O}_{Z_{IJ}}(-E_J-E_0).$$ 

Let $H_1 = \psi_I^*L_I = \psi_J^*L_J$ and  
$H = {\overline \eta}^*\psi_I^*L_I = {\overline \eta}^*\psi_J^*L_J = {\overline \eta}^*H_1$. Now the birational invariance of volume implies that
 \begin{align*}
   &\mathrm{vol}_{Z_I}(xL_I-F_I) = \mathrm{vol}_{Z_{IJ}}(xH_1 - E_J - E_0) = \mathrm{vol}_{X^+}(xH -{\overline \eta}^*E_J - {\overline \eta}^*E_0),\\
  &\mathrm{vol}_{Z_J}(xL_J-F_J) = \mathrm{vol}_{Z_{IJ}}(xH_1 - E_J) = \mathrm{vol}_{X^+}(xH - {\overline \eta}^*E_J).
 \end{align*}
The $\mathbb{R}$-Cartier divisor $xH- {\overline \eta}^*E_J = {\overline {\eta}}^*\psi_J^*(xL_J-F_J)$ is both big and nef on  the normal variety $X^+$ and
$$\mathrm{vol}_{X^+}(xH - {\overline \eta}^*E_J -{\overline \eta}^*E_0) = \mathrm{vol}_{X^+}(xH -  {\overline \eta}^*E_J).$$
Therefore, by \cite[Theorem A]{FKL16} we conclude that ${\overline \eta}^*E_0=0$. In other words, 
$$\mathcal{I}\mathcal{O}_{X^+} = 
\mathcal{O}_{X^+}(-{\overline \eta}^*E_I)
= \mathcal{O}_{X^+}(-{\overline \eta}^*E_J-{\overline \eta}^*E_0) = 
\mathcal{O}_{X^+}(-{\overline \eta}^*E_J) = \mathcal{J}\mathcal{O}_{X^+}.$$ Hence
 $$\overline{\mathcal{JO}_{\overline V}} = \nu_*\mathcal{J}\mathcal{O}_{X^+} = \nu_*\mathcal{I}\mathcal{O}_{X^+} = \overline{\mathcal{IO}_{\overline V}}$$
 as ${\overline V}$ is normal. Now since $\eta\colon \overline{V}\longrightarrow V$ is a finite map, and the fact that integral closure extends and contracts from integral extensions (\cite[Proposition 1.6.1]{HS06}) we have $\overline{\mathcal{J}}  = \overline{\mathcal{I}}$.
 \end{proof}

\begin{thm}\label{impsat}
Let $R$, $I$ and $J$ be as in Notations \ref{n2}. Then the following statements are equivalent:
\begin{enumerate}
 \item[$(i)$] $f_{\{\widetilde{I^n}\}}(x) = f_{\{\widetilde{J^n}\}}(x)$ for all $x\in\mathbb{R}_{\geq 0}$.
 \item[$(ii)$] $f_{\{\widetilde{I^n}\}}(x) = f_{\{\widetilde{J^n}\}}(x)$ for some integer $x>\bf d$. Further, we can choose $x$ to be real
  number if $k$ is perfect.  
 \item[$(iii)$] $\ell_R({\overline{J}}/{\overline{I}}) <\infty$.
 \end{enumerate}
\end{thm}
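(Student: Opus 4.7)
The plan is to establish the cyclic chain $(i) \Rightarrow (ii) \Rightarrow (iii) \Rightarrow (iv) \Rightarrow (i)$. Among these, $(i) \Rightarrow (ii)$ is immediate.

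For $(ii) \Rightarrow (iii)$, I would fix an integer $x_0 > \mathbf{d}$ at which $f_{\{\widetilde{I^n}\}}(x_0) = f_{\{\widetilde{J^n}\}}(x_0)$. Because $x_0$ strictly exceeds both $d(I)$ and $d(J)$, Theorem \ref{mainsat}(vi) identifies each saturation density function with the corresponding adic density function at $x_0$, so $f_{\{I^n\}}(x_0) = f_{\{J^n\}}(x_0)$. Proposition \ref{sing_pt_adic} applied with $c = x_0$ then delivers $\ell_R(\overline{J}/\overline{I}) < \infty$.

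For $(iii) \Rightarrow (iv)$, the graded $R$-module $\overline{J}/\overline{I}$ has finite length and is non-negatively graded, so it vanishes in sufficiently high degrees; equivalently $(\overline{I})_m = (\overline{J})_m$ for all $m \gg 0$. Since two homogeneous ideals of $R$ induce the same ideal sheaf on $V = \mathrm{Proj}\;R$ exactly when they agree in all sufficiently large degrees (equivalently, when their saturations coincide), this forces $\overline{\mathcal{I}} = \overline{\mathcal{J}}$.

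For $(iv) \Rightarrow (i)$, the invariance of the saturation density function under integral closure (Theorem \ref{6.1}(ii)) lets me replace $I$ and $J$ by $\overline{I}$ and $\overline{J}$ without altering either function. Under this reduction, the hypothesis becomes $\mathcal{I} = \mathcal{J}$ as ideal sheaves on $V$. Since the formation of ideal sheaves commutes with powers on each standard affine chart $D_+(x_i)$ of $V$ (via $(I^n)_{(x_i)} = (I_{(x_i)})^n$), one obtains $\mathcal{I}^n = \mathcal{J}^n$ for every $n \geq 0$, and as this is the common ideal sheaf of the graded ideals $I^n$ and $J^n$, we must have $\widetilde{I^n} = \widetilde{J^n}$ in $R$ for every $n$. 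Consequently the sequences $\{g_n\}$ of Definition \ref{d1}(2) computed from $I$ and from $J$ coincide term-by-term, and their pointwise limits $f_{\{\widetilde{I^n}\}}$ and $f_{\{\widetilde{J^n}\}}$ agree on all of $\R_{\geq 0}$.

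The main obstacle I anticipate is the sheaf-theoretic bookkeeping in $(iv) \Rightarrow (i)$: one must carefully verify that the assumed equality of integral closures of ideal sheaves on $\mathrm{Proj}\;R$ is preserved upon passing to powers and is then faithfully translated back into equalities of the saturations of $I^n$ and $J^n$ inside $R$ for \emph{every} $n$. The remaining three implications are direct applications of the machinery already developed in the paper.
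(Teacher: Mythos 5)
Your proof is correct, and it leans on exactly the same machinery the paper uses: Proposition \ref{sing_pt_adic} for the hard implication $(ii) \Rightarrow (iii)$, Theorem \ref{6.1}(ii) to reduce to integrally closed ideals, and the fact that the ideal sheaf of $\overline{I}$ on $V$ is $\overline{\mathcal{I}}$. The organizational difference is minor: you run a single cycle $(i) \Rightarrow (ii) \Rightarrow (iii) \Rightarrow (iv) \Rightarrow (i)$, while the paper proves the cycle $(i) \Rightarrow (ii) \Rightarrow (iii) \Rightarrow (i)$ and then $(iii) \iff (iv)$ separately. Your $(iii) \Rightarrow (iv)$ is an abstract restatement of the paper's chart-by-chart verification (both amount to the observation that ideals agreeing in high degree define the same sheaf), and your $(iv) \Rightarrow (i)$ is a somewhat more explicit account of what the paper's terse $(iii) \Rightarrow (i)$ is doing: you correctly note that after passing to $\overline{I}$ and $\overline{J}$ the hypothesis gives $\mathcal{I} = \mathcal{J}$, that this propagates to all powers via $(I^n)_{(x_i)} = (I_{(x_i)})^n$, and that equality of ideal sheaves of $I^n$ and $J^n$ forces $\widetilde{I^n} = \widetilde{J^n}$ since saturation is governed by the tail of the grading. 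That last step is exactly the ``bookkeeping'' you flagged as the main obstacle, and it goes through; you might just spell out that two homogeneous ideals share the same ideal sheaf on $\mathrm{Proj}\;R$ if and only if they coincide in all large degrees, which is in turn equivalent to having the same $\mathbf{m}$-saturation.
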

\begin{proof} $(i)\implies (ii)$  is obvious. 

\vspace{5pt}
\noindent{$(ii)\implies (iii)$} Note that on the interval $({\bf d}, \infty)$ the saturation density functions coincide with their respective adic density functions. Therefore, the first part of the assertion $(ii)$ implies $(iii)$ follows from Proposition \ref{sing_pt_adic}, and the second part of the same assertion follows from Proposition \ref{impsat1}.
 
\vspace{5pt}
\noindent{$(iii)\implies (i)$}. The hypothesis $\ell_R({\overline{J}}/{\overline{I}})<\infty$ implies that ${\overline{J}}^n:_R{\bf m}^{\infty} = {\overline{I}}^n:_R{\bf m}^{\infty}$. Therefore the saturation density functions for ${\overline I}$ and ${\overline J}$ are same.
On the other hand, the saturation density function of an ideal is same as the saturation density function of its integral closure, see part $(ii)$ of Theorem \ref{6.1}.
 \end{proof}

\begin{thm}\label{adicfunction}
Let $R$, $I$ and $J$ be as in Notations \ref{n2}.  Then the following statements are equivalent:
\begin{enumerate}
 \item[$(i)$] $f_{\{I^n\}}(x) = f_{\{J^n\}}(x)$ for all $x\in\mathbb{R}_{\geq 0}$.
  \item[$(ii)$] $f_{\{I^n\}}(x) = f_{\{J^n\}}(x)$ for almost all $x\in\mathbb{R}_{\geq 0}$ except at finitely many points.
 \item[$(iii)$] $f_{\{\widetilde{\mathsf{I}^n}\}}(x) =
 f_{\{\widetilde{\mathsf{J}^n}\}}(x)$ for all $x\in\mathbb{R}_{\geq 0}$.
 
 \item[$(iv)$] $\overline{I} = \overline{J}$.
 \end{enumerate}
\end{thm}
\begin{proof}
$(i) \implies (ii)$ is obvious.

\vspace{5pt}
\noindent{$(ii) \implies (iii)$}.
Recall from Notations \ref{nnotation} that $S=R[y]$,
$\mathbf{n}=\mathbf{m}+(y)$, $\mathsf{I}=IS$, and $\mathsf{J}=JS$. Note that if $I^n
= \cap_{i=1}^t Q_{i,n}$ is a primary decomposition in $R$, then
$\mathsf{I}^n=\cap_{i=1}^t Q_{i,n}S$ is a primary decomposition in $S$, see
\cite[Chapter 4, Exercise 7]{atiyah}. Thus, for all integers $n>0$, one has
$$\widetilde{\mathsf{I}^n}:= \mathsf{I}^n\colon_S \mathbf{n}^{\infty} =
\mathsf{I}^n, \quad \text{and} \quad \widetilde{\mathsf{J}^n}:= \mathsf{J}^n\colon_S
\mathbf{n}^{\infty} = \mathsf{J}^n.$$ Further observe that ${(\mathsf{I}^n)}_m =
\sum_{i=1}^m {(I^n)}_i\cdot y^{m-i}$. So 
\begin{equation}\label{satdensity}f_{\{\mathsf{I}^n\}}(x) :=
\lim_{n\to\infty}\dfrac{\ell_k\left((\mathsf{I}^n)_{\lfloor xn
\rfloor}\right)}{n^d/(d+1)!} = \lim_{n\to\infty}\dfrac{\sum_{m=1}^{\lfloor xn
\rfloor}\ell_k\left((I^n)_m\right)}{n^d/(d+1)!} = (d+1)\cdot\int_0^x
f_{\{I^n\}}(y)dy,\end{equation}
where the last equality is a consequence of Lebesgue's dominated convergence theorem (see  part $(iii)$ of Theorem
\ref{2.3}). For all $x\in \mathbb{R}_{\geq 0}$, we have
$$f_{\{\widetilde{\mathsf{I}^n}\}}(x) = f_{\{\mathsf{I}^n\}}(x) = (d+1)\cdot\int_0^x
f_{\{I^n\}}(y)dy, \quad \text{and} \quad f_{\{\widetilde{\mathsf{J}^n}\}}(x) =
f_{\{\mathsf{J}^n\}}(x) = (d+1)\cdot\int_0^x f_{\{J^n\}}(y)dy.$$ 

\vspace{5pt}

\noindent{$(iii) \implies (iv)$}.
Now applying Theorem \ref{impsat} with $R$, $I$, and $J$ replaced with $S$, $\mathsf{I}$, and $\mathsf{J}$ respectively, gives that
$\overline{\mathsf{I}}\colon_S \mathbf{n}^{\infty} = \overline{\mathsf{J}}\colon_S
\mathbf{n}^{\infty}$. By imitating an argument from \cite[Proposition 5.2.1]{HS06}
one can verify that $\overline{\mathsf{I}}:= \overline{IR[y]} = \sum_{i\in
\mathbb{N}}\overline{I}\cdot y^i = \overline{I}S$, and so
$\overline{\mathsf{I}}\colon_S \mathbf{n}^{\infty} = \overline{I}S$. In other words,
$\overline{I}S = \overline{J}S$ and intersecting this with $R$ gives us $(iv)$.

\vspace{5pt}

\noindent{$(iv) \implies (i)$} follows from Theorem \ref{6.1}.
\end{proof}

\section{Criteria for \texorpdfstring{$\ell_R({\overline{J}}/{\overline{I}})<\infty$}{l(J/I) finite} and for \texorpdfstring{${\overline{I}} ={\overline{J}}$}{I=J} in terms of multiplicities}

\begin{notations}\label{n3}
Adopt Notations \ref{nnotation}. Further assume that $R$ is a domain. From \cite{Rob98} or \cite[Theorem 4.2]{HT03}, we know that there exist constants $m_0\geq 0$ and $n_0\geq 0$ such that for all integers $m\geq d(I)n+m_0$ and $n \geq n_0$, the length function $\ell_k \left((I^n)_m\right)$ agrees with a polynomial in $m$ and $n$, i.e.,
$$\ell_k \left((I^n)_m\right) = \sum_{i=0}^{d-1}\frac{e_i(R[It])}{i!(d-1-i)!}m^{i}n^{d-1-i}+ \mbox{lower degree terms},$$ where the coefficients $e_i(R[It])$ are integers for all $i=0,\ldots,d-1$. We shall refer to $e_i(R[It])$ as the \emph{$i^{\mathrm{th}}$ RA-multiplicity of $R[It]$}, where `RA' stands for the Rees algebra.
\end{notations}

\begin{rmk}\label{firstremark}
The $i^{\mathrm{th}}$ RA-multiplicity $e_i(R[It])$ is the intersection number $H^i\cdot E^{d-1-i}$, where we recall that $E$ is the exceptional divisor  of the blow up $\pi\colon X\to V=\mbox{Proj}\;R$ along the ideal sheaf $\mathcal{I}$ (corresponding to $I$), and $H$ is the pullback of a hyperplane section on $V$. By Theorem~\ref{mainsat}, for all real numbers $x\geq d(I)$, the saturation density function for $I$ is given by
 \begin{equation}\label{mm}
f_{\{\widetilde{I^n}\}}(x) = d\cdot\sum_{i=0}^{d-1}\binom{d-1}{i}e_i(R[It])x^{i} =d\cdot\sum_{i=0}^{d-1}(-1)^{d-1-i}\binom{d-1}{i}\left(H^{i}\cdot E^{d-1-i}\right)x^{i}.
\end{equation}

Further, $e_{d-1}(R[It]) = H^{d-1} = e(R)$ and therefore is independent of the ideal $I$. We shall later see in Lemma \ref{lem1} that $e_i(R[It]) = 0$ for all $i$ satisfying $\dim(R/I)\leq i <d-1$.
\end{rmk}

Now we are ready to give a numerical characterization of the properties $(1)$ $\ell_R(\overline{J}/\overline{I}) <\infty$, and $(2)$ $\overline{I} = \overline{J}$ in terms of various multiplicities.
 
\begin{thm}\label{integral1}
Let $R$, $I$ and $J$ be as in Notations \ref{n3}. Then the following statements $(i)$, $(ii)$, and $(iii)$  are equivalent:
\begin{enumerate}
 \item[$(i)$] $\ell_R ({\overline{J}}/\overline{I}) <\infty$.
 \item[$(ii)$] $e_i(R[It]) = e_i(R[Jt])$ for all $i$ where $0\leq i <\dim(R/I)$.
 \item[$(iii)$] $e\big({R[It]}_{\Delta_{(c,1)}}\big) =  e\big({R[Jt]}_{\Delta_{(c,1)}}\big)$ for some (every) integer  $c> {\bf d}$.
\end{enumerate}
 The following statements $(a)$, $(b)$, and $(c)$ are also equivalent:
\begin{enumerate}
 \item[$(a)$] $\overline{I} = \overline{J}$.
 \item[$(b)$] $\varepsilon(I) = \varepsilon(J)$ and $e_i(R[It]) = e_i(R[Jt])$ for all $0\leq i <  \dim(R/I)$.
 \item[$(c)$] $\varepsilon(I) = \varepsilon(J)$ and $e\big({R[It]}_{\Delta_{(c, 1)}}\big) =  e\big({R[Jt]}_{\Delta_{(c, 1)}}\big)$ for some (every) integer $c> {\bf d}$.
 \end{enumerate}
\end{thm}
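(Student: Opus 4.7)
My plan is to prove Part 1 first (the equivalence of $(i)$, $(ii)$, $(iii)$) and then derive Part 2 by feeding in the $\varepsilon$-multiplicity hypothesis. The bridge between the parts is the following dictionary: for any integer $c > 0$, the diagonal subalgebra $R[It]_{\Delta_{(c,1)}}$ is a $d$-dimensional standard graded domain over $k$ whose Hilbert function at level $n$ is $\ell_k((I^n)_{cn})$, so comparing with Definition \ref{d1} gives
$$f_{\{I^n\}}(c) \;=\; d\cdot e\bigl(R[It]_{\Delta_{(c,1)}}\bigr),$$
and similarly for $J$. Combined with $f_{\{I^n\}}(x) = f_{\{\widetilde{I^n}\}}(x)$ for $x > \mathbf{d}$ (Theorem \ref{mainsat}$(vi)$), condition $(iii)$ simply asserts $f_{\{\widetilde{I^n}\}}(c) = f_{\{\widetilde{J^n}\}}(c)$ at one integer $c > \mathbf{d}$, and Theorem \ref{impsat} immediately delivers $(i) \Leftrightarrow (iii)$.

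For $(i) \Leftrightarrow (ii)$ I would exploit formula \eqref{mm} of Remark \ref{firstremark}: on $[\mathbf{d},\infty)$ the two saturation density functions are polynomials of degree $d-1$ whose coefficients, up to explicit binomial factors, are the RA-multiplicities $e_i(R[It])$ and $e_i(R[Jt])$. From $(i)$, Theorem \ref{impsat} gives equality of these polynomials on an interval, hence equality of all coefficients, which is stronger than $(ii)$. Conversely, $(ii)$ together with the universal equality $e_{d-1}(R[It]) = e(R) = e_{d-1}(R[Jt])$ and the vanishing $e_i(R[\cdot\, t]) = 0$ for $\dim(R/I) \leq i < d-1$ (the forthcoming Lemma \ref{lem1}, applied to both $I$ and $J$, using $\dim(R/J) \leq \dim(R/I)$) forces the two polynomials to coincide identically on $[\mathbf{d},\infty)$; evaluating at any integer $c > \mathbf{d}$ yields $(iii)$, and hence $(i)$.

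For Part 2, the implications $(a) \Rightarrow (b)$ and $(a) \Rightarrow (c)$ are immediate from Part 1 together with the integral-closure invariance of all three density functions (Theorem \ref{6.1}), which in particular gives $\varepsilon(I) = \varepsilon(J)$; and $(b) \Leftrightarrow (c)$ is a direct translation via Part 1. The heart of Part 2 is $(b) \Rightarrow (a)$: Part 1 gives $\ell_R(\overline{J}/\overline{I}) < \infty$, and after replacing $I, J$ by their integral closures (which preserves every ingredient in $(b)$ by Theorem \ref{6.1}) we may assume $I = \overline{I}$, $J = \overline{J}$, so that $\ell_R(J/I) < \infty$. Swanson's stability lemma (Lemma \ref{swanson}) then provides an integer $c > \mathbf{d}$ with $(J^n)_m = (I^n)_m$ for all $m \geq cn$, whence Theorem \ref{2.3}$(iii)$ yields
$$\lim_{n \to \infty} \frac{\ell_R(J^n/I^n)}{n^d/d!} \;=\; \int_0^c \bigl(f_{\{J^n\}}(x) - f_{\{I^n\}}(x)\bigr)\,dx.$$
Since $\ell_R(\overline{J}/\overline{I})<\infty$, Theorem \ref{impsat} gives $f_{\{\widetilde{I^n}\}} = f_{\{\widetilde{J^n}\}}$, and subtracting the two identities of Theorem \ref{epsilon}$(i)$ rewrites the integrand as $f_{\varepsilon(I)} - f_{\varepsilon(J)}$. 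Both $\varepsilon$-density functions are supported in $[0, \mathbf{d}] \subset [0, c]$ and have total integrals $\varepsilon(I)$ and $\varepsilon(J)$ respectively (Theorem \ref{epsilon}$(iii),(iv)$), so the right-hand side vanishes by hypothesis; the Amao--Rees theorem then forces $I$ to be a reduction of $J$, and integral closedness gives $I = J$.

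The step I expect to be most technically delicate is this final integration argument in $(b) \Rightarrow (a)$: one must simultaneously use Swanson's truncation to cut the Hilbert sum at $cn$, the integral formula of Theorem \ref{2.3}$(iii)$ to cross over to the density function side, and the saturation/$\varepsilon$-density interplay to convert the adic difference into an $\varepsilon$-density difference whose integral is the difference of $\varepsilon$-multiplicities. Once this chain is in place, everything else reduces to routine bookkeeping between density functions, RA-multiplicities, and the $(c,1)$-diagonal subalgebras.
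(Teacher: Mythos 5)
Your proof is correct and follows essentially the same approach as the paper's: the diagonal-subalgebra dictionary $f_{\{I^n\}}(c) = d\cdot e(R[It]_{\Delta_{(c,1)}})$ for Part 1, formula \eqref{mm} together with (the forthcoming) Lemma \ref{lem1} for the RA-multiplicity equivalence, Theorem \ref{impsat} as the workhorse, and integral-closure invariance plus the $\varepsilon$-density decomposition plus Amao--Rees for Part 2. The only stylistic difference is in $(b)\Rightarrow(a)$: you compute the Amao--Rees leading coefficient directly via Swanson's lemma and Theorem \ref{2.3}$(iii)$, identifying the limit with $\varepsilon(I)-\varepsilon(J)$, whereas the paper first infers almost-everywhere equality of the adic density functions from $\varepsilon(I)=\varepsilon(J)$ and the nonnegativity of $f_{\{J^n\}}-f_{\{I^n\}}$ and then cites Theorem \ref{all_adic} --- these are equivalent, and your version arguably inlines what is otherwise a slightly informal (a.e.\ rather than pointwise) invocation of that theorem.
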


\begin{proof}
We recall that $f_{\{I^n\}}(x) = f_{\{\widetilde{I^n}\}}(x) $ for all $x\geq  d(I)$ and $f_{\{J^n\}}(x) =f_{\{\widetilde{J^n}\}}(x)$ for all $x\geq d(J)$.

\vspace{5pt}

\noindent{$(i) \iff (iii)$.} For any integer $c>{\bf d}$, we note that both the diagonal subalgebras ${R[It]}_{\Delta_{(c, 1)}}$ and ${R[It]}_{\Delta_{(c, 1)}}$ are $d$-dimensional standard graded domains over $k$, see \cite[Lemma 2.2]{HT03}. So we have a well-defined notion of their Hilbert-Samuel multiplicities, i.e.,
$$e\big({R[It]}_{\Delta_{(c, 1)}}\big) = \lim_{n\to \infty} \frac{{\ell_k\left((I^{n})_{cn}\right)}}{{n^{d-1}/(d-1)!}} \quad\mbox{and}\quad e\big({R[Jt]}_{\Delta_{(c, 1)}}\big) = \lim_{n\to \infty} \frac{{\ell_k\left((J^{n})_{cn}\right)}}{{n^{d-1}/(d-1)!}}.$$

On the other hand
from Definition~\ref{d1} and Theorem~\ref{2.3} we have
\begin{equation*}
f_{\{I^n\}}(c) = d\cdot e\big({R[It]}_{\Delta_{(c,1)}}\big) \quad\mbox{and}\quad f_{\{J^n\}}(c) = d\cdot e\big({R[Jt]}_{\Delta_{(c,1)}}\big).
\end{equation*}
In particular, $e\big({R[It]}_{\Delta_{(c,1)}}\big) =  e\big({R[Jt]}_{\Delta_{(c,1)}}\big)$ if and only if $f_{\{\widetilde{I^n}\}}(c) = f_{\{\widetilde{J^n}\}}(c)$. Now the assertion follows from Theorem \ref{impsat}.

\vspace{5pt}
 
\noindent{$(i) \iff (ii)$.} This follows from Remark \ref{firstremark} and Theorem \ref{impsat}.

\vspace{5pt}
 
\noindent{$(a) \implies (b)$.} We only need to show that 
$\varepsilon(I) = \varepsilon(J)$, which follows from 
 Theorem \ref{adicfunction} and Theroem \ref{epsilon} parts $(i)$ and $(iv)$.

\vspace{5pt}
 
\noindent{$(b) \implies (a)$.} From \eqref{mm} we have $f_{\{\widetilde{I^n}\}}(x) = f_{\{\widetilde{J^n}\}}(x)$ for all $x\geq {\bf d}$ and thus by Theorem \ref{impsat} this equality holds for all $x\geq 0$. From Theorem \ref{epsilon} we also know that
$$f_{\varepsilon(I)}(x) = f_{\{\widetilde{I^n}\}}(x) - f_{\{I^n\}}(x),\quad f_{\varepsilon(J)}(x) = f_{\{\widetilde{J^n}\}}(x) - f_{\{J^n\}}(x)$$ and $f_{\varepsilon(I)}(x) = 0 = f_{\varepsilon(J)}(x)$ for all $x > {\bf d}$. This gives that
\begin{equation}\label{diff_epsilon}
\varepsilon(I)-\varepsilon(J) = \int_0^{\bf d} \left(f_{\varepsilon(I)}(x)-f_{\varepsilon(J)}(x)\right)dx = \int_0^{\bf d} \left(f_{\{J^n\}}(x)-f_{\{I^n\}}(x)\right)dx.
\end{equation}
But $f_{\{J^n\}}- f_{\{I^n\}}$ is a nonnegative function which is continuous everywhere except possibly at (at most) two points. Therefore, $\varepsilon(I) = \varepsilon(J)$ implies that $f_{\{J^n\}} =  f_{\{I^n\}}$ almost everywhere on the closed interval $[0, {\bf d}]$ and hence almost everywhere on $[0, \infty)$. Thus ${\overline{I}} = {\overline{J}}$ by Theorem \ref{adicfunction}.

\vspace{5pt}
 
\noindent{$(b) \iff (c)$.} This follows from the equivalence $(ii)\iff (iii)$.
\end{proof}

\begin{cor}\label{equigen}
Let $R$, $I$ and $J$ be as in Notations \ref{n3}. Further assume that both $I$ and $J$ are generated in equal degrees $d(I)$ and $d(J)$ respectively. Then following statements are equivalent:
\begin{enumerate}
 \item[$(i)$] $\overline{I} = \overline{J}$.
 \item[$(ii)$] $d(I)=d(J)$ and $e_i(R[It]) = e_i(R[Jt])$ for all $0\leq i < \dim(R/I)$.
 \item[$(iii)$] $d(I)=d(J)$ and $e\big({R[It]}_{\Delta_{(c,1)}}\big) =  e\big({R[Jt]}_{\Delta_{(c,1)}}\big)$ for some (every) integer  $c> {\bf d}$.
\end{enumerate}
\end{cor}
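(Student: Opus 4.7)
The plan is to reduce to Theorem \ref{integral1} and use equigeneration to supply the $\varepsilon$-multiplicity equality ``for free''. First, $(ii) \iff (iii)$ is immediate from Theorem \ref{integral1}$(ii)\iff(iii)$, since the extra condition $d(I) = d(J)$ appears identically on both sides and is orthogonal to the rest of the equivalence. So only the equivalence $(i) \iff (ii)$ requires genuine argument.

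For $(i) \implies (ii)$: assuming $\overline{I} = \overline{J}$, Theorem \ref{integral1}$(a)\implies(b)$ delivers the RA-multiplicity equalities. To see $d(I) = d(J)$, I will argue that the minimum degree of a nonzero homogeneous element of $\overline{I}$ is exactly $d(I)$. Indeed, any homogeneous $r \in \overline{I}$ of degree $e$ satisfies a monic integral equation $r^n + \sum a_i r^{n-i} = 0$ with $a_i \in I^i$; extracting the degree $ne$ component of this relation, and using that equigeneration forces $I^i \cap R_m = 0$ for $m < i\cdot d(I)$, we obtain $(a_i)_{ie} = 0$ whenever $e < d(I)$, collapsing the equation to $r^n = 0$ and hence $r = 0$ since $R$ is a domain. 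The generators of $I$ realize this minimum in $\overline{I}$, and symmetrically for $\overline{J}$; the assumption $\overline{I} = \overline{J}$ then yields $d(I) = d(J)$.

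The substantive direction is $(ii) \implies (i)$. The hypothesis on RA-multiplicities is exactly Theorem \ref{integral1}$(ii)$, so Theorem \ref{integral1}$(ii)\implies(i)$ yields $\ell_R(\overline{J}/\overline{I}) < \infty$. Theorem \ref{impsat} then gives $f_{\{\widetilde{I^n}\}} = f_{\{\widetilde{J^n}\}}$ pointwise on $\R_{\geq 0}$. Here equigeneration enters decisively: under $d(I) = d(J) = \mathbf{d}$, Theorem \ref{mainadic}$(ii)$ shows $f_{\{I^n\}} \equiv 0$ on $[0, \mathbf{d})$, so $f_{\varepsilon(I)} = f_{\{\widetilde{I^n}\}}$ on $[0, \mathbf{d})$, while Theorem \ref{epsilon}$(iii)$ tells us $f_{\varepsilon(I)}$ is supported in $[\alpha_I, \mathbf{d}]$ and hence vanishes on $(\mathbf{d}, \infty)$. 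Consequently $\varepsilon(I) = \int_0^{\mathbf{d}} f_{\{\widetilde{I^n}\}}(x)\,dx$, and symmetrically for $J$. The already-established equality of saturation density functions therefore forces $\varepsilon(I) = \varepsilon(J)$, and Theorem \ref{integral1}$(b)\implies(a)$ concludes $\overline{I} = \overline{J}$.

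The main conceptual point, and the only real subtlety, is the reduction of $\varepsilon(I)$ to an integral of the saturation density function over $[0, \mathbf{d}]$ under equigeneration; once this identity is in place, the saturation equality supplied by $\ell_R(\overline{J}/\overline{I}) < \infty$ automatically upgrades the $(ii)$-style conclusion of Theorem \ref{integral1} to the full $(a)$-style statement $\overline{I} = \overline{J}$, bypassing the need for a separate $\varepsilon$-multiplicity hypothesis.
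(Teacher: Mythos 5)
Your proof is correct. The decomposition $(ii) \iff (iii)$ by Theorem \ref{integral1} and the $(ii) \implies (i)$ argument via $\varepsilon(I) = \int_0^{\mathbf d} f_{\{\widetilde{I^n}\}}(x)\,dx$ are exactly the paper's route (the paper phrases it as $\varepsilon(J)-\varepsilon(I) = \int_0^{d(J)} f_{\{J^n\}} - \int_0^{d(I)} f_{\{I^n\}} = 0$, which after the reduction $f_{\{\widetilde{I^n}\}} = f_{\{\widetilde{J^n}\}}$ and the vanishing of the adic functions below $\mathbf d$ is the same computation).

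Where you genuinely diverge is $(i)\implies(ii)$, and specifically the claim $d(I)=d(J)$. The paper deduces this from the shape of the adic density functions: by Theorem \ref{mainadic}, $f_{\{I^n\}}$ is zero on $[0,d(I))$ and coincides with the strictly positive $f_{\{\widetilde{I^n}\}}$ on $(d(I),\infty)$; Theorem \ref{all_adic} gives $f_{\{I^n\}} = f_{\{J^n\}}$, and comparing the points where the two functions become positive forces $d(I)=d(J)$. You instead argue directly from the integral equation: for a homogeneous $r\in\overline{I}$ of degree $e<d(I)$, the degree-$ne$ component of $r^n + a_1 r^{n-1} + \cdots + a_n = 0$ collapses to $r^n = 0$ because $(I^i)_{ie}=0$, hence $r=0$ in the domain $R$; so the minimal degree of a nonzero element of $\overline{I}$ is exactly $d(I)$, and $\overline{I}=\overline{J}$ forces $d(I)=d(J)$. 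This is more elementary and entirely bypasses the density-function machinery for this step; it isolates the cleaner intrinsic statement that for an equigenerated ideal $I$, the degree $d(I)$ is already an invariant of $\overline{I}$, which is perhaps a more memorable reason for the conclusion. The paper's version has the virtue of being internal to the density-function framework that the rest of the section relies on. Both are valid; yours buys a slight gain in self-containedness.
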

\begin{proof}
We only need to prove $(i)\iff (ii)$.

\vspace{5pt} 

Note  that
$d(I)\geq d(J)$. We have
$f_{\{I^n\}}(x) = 0$ if $x< d(I)$, $f_{\{J^n\}}(x) = 0$ if $x<d(J)$, and $f_{\{J^n\}}(x) = f_{\{\widetilde{J^n}\}}(x)>0$ if $x>d(J)$.

\vspace{5pt}

\noindent{$(i) \implies (ii)$.} Second part of $(ii)$ follows from Theorem~\ref{epsilon}. 
 If $d(I)> d(J)$ then $f_{\{J^n\}}(x) - f_{\{I^n\}}(x) = f_{\{J^n\}}(x) > 0$ for all $x\in (d(J), d(I))$, which contradicts the assertion
of Theorem \ref{adicfunction}. 

\vspace{5pt}

\noindent{$(ii) \implies (i)$.} It is enough to show $\varepsilon(I) = \varepsilon(J)$. Theorem \ref{impsat} and the hypothesis on the RA-multiplicities imply that $f_{\{\widetilde{I^n}\}}(x) = f_{\{\widetilde{J^n}\}}(x)$ for all $x\geq 0$. Therefore, if $ d(I) = d(J)$ then
{\small 
$$\varepsilon(J)-\varepsilon(I) = 
\int_0^{d(J)} 
\big(f_{\{\widetilde{J^n}\}}- f_{\{J^n\}}\big)(x)dx - \int_0^{d(J)}\big(f_{\{\widetilde{I^n}\}} - f_{\{I^n\}}\big)(x)dx =     \int_0^{d(J)}
\big(f_{\{I^n\}}- f_{\{J^n\}}\big)(x)dx =   0.$$}
 \end{proof}

\section{Multiplicities and integral dependence}\label{mixmul}

Let $(R,{\bf m})$ be a Noetherian local ring of dimension $d$. Let $I$ be an ${\bf m}$-primary ideal and $J$ be an arbitrary ideal with positive height. Bhattacharya \cite{Bha57} showed that for all $u\gg 0$ and $v\gg 0$, the bivariate Hilbert function $\ell_R \left(I^{u}J^v/I^{u+1}J^v\right)$ agrees with a bivariate numerical polynomial $Q(u,v)$ of total degree $d-1$. Moreover, we can write
\begin{equation}\label{mixedMul}
Q(u,v) = \sum_{i=0}^{d-1} \dfrac{e_i\left(I \vert J\right)}{i!(d -1-i)!}u^{d-1-i}v^i + \text{lower degree terms},
\end{equation}
where the coefficients $e_i\left(I\vert J\right)$ are integers. Risler and Tessier \cite{Tei73} studied these coefficients when $J$ is ${\bf m}$-primary and called them \emph{mixed multiplicities of $I$ and $J$}. Later in \cite{KV89}, Katz and Verma analyzed the coefficients when $J$ is an arbitrary ideal, also see \cite{Tru01}.

\begin{notations}\label{n4}
Let $R$, $\bf m$, and $k$ be as in Notations \ref{nnotation}. Further assume that $R$ is a domain. Let $I$ be a nonzero homogeneous ideal in $R$. Let $\beta>0$ be an integer such that $I$ is generated in degrees $\leq \beta$ and $I_{\geq\beta}$ denotes the truncated ideal $\oplus_{m\geq \beta}I_m$.
\end{notations}

The RA-multiplicities (see Notations \ref{n3}) and the mixed multiplicities are related to each other as follows. From \cite[Lemma 5.3]{DDRV24} and the proof of \cite[Proposition 5.4]{DDRV24}, one has
\begin{align}
e_{i}\big(R[It]\big)&=\sum_{j=0}^{d-1-i}(-1)^j \binom{d-1-i}{j}{\beta}^{j}e_{d-1-i-j}\big(\mathbf{m}|I_{\geq\beta}\big) &&\mbox{for all } i=0,\ldots,d-1, \;\text{and}
\label{imprel1}\\
e_{i}\big(\mathbf{m}|I_{\geq\beta}\big) &= \sum_{j=0}^i \binom{i}{j}{\beta}^{j}e_{d-1-i+j}(R[It]) &&\mbox{for all } i=0,\ldots,d-1.\label{imprel2}
\end{align}
Moreover, if $\dim\;(R/I) = i_0$ then by \cite[Corollary 4.5]{Tru01}
$$e_{j}\big(\mathbf{m}|I_{\geq\beta}\big)=\beta^je(R)\quad\mbox{for all } j=0,\ldots,d-i_0-1.$$
	
\begin{lemma}\label{lem1}
Let $R$ and $I$ be as in Notations \ref{n4} and assume that $\dim R/I=i_0$. Then
$$e_{i_0}\big(R[It]\big)=
e_{i_0+1}\big(R[It]\big)= \cdots = e_{d-2}\big(R[It]\big)=
0\quad\mbox{and}\quad e_{d-1}(R[It]) = e(R).$$
\end{lemma}
\begin{proof}
Since $R$ is a standard graded ring, it can be checked that $\sqrt{\langle I_{\beta}\rangle} = \sqrt{I}$. In particular,
\[\mathrm{height}\;\langle I_{\beta}\rangle = \mathrm{height}\;I = \dim R - \dim R/I = d-i_0.\]
Therefore, for all $j=1,\ldots,d-i_0-1$, we have
\begin{equation*}
e_{d-1-j}\big(R[It]\big)=\sum_{k=0}^{j}(-1)^k \binom{j}{k}\beta^{k}e_{j-k}\big(\mathbf{m}|\langle I_\beta \rangle\big)
=\beta^{j} e(R)\sum_{k=0}^j (-1)^k \binom{j}{k} = 
\beta^je(R)(1-1)^j = 0.
\end{equation*}
The second statement follows as $e(R) = H^{d-1} = e_{d-1}(R[It])$ (also by \cite[Theorem 4.2]{HT03}).
\end{proof}

\begin{rmk}\label{polarmultiplicity}
From the expressions \eqref{imprel1} and \eqref{imprel2}, it follows that the RA-multiplicities of $R[It]$ and $R[Jt]$ coincide if and only if the mixed multiplicities $\{e_i({\bf m}\vert I_{\geq\beta})\}_i$ and $\{e_i({\bf m}\vert J_{\geq\beta})\}_i$ coincide, for some integer $\beta \geq {\bf d}$. Whereas, it is known that the mixed multiplicities  $\{e_i({\bf m} \vert L)\}_i$ of an ideal $L$ are essentially  same as the polar multiplicities of $L$ due to Gaffney and Gassler \cite{GG99}. For an explanation of this statement, one may also refer to \cite[Remark 4]{AMP19} or \cite[Lemma 2.4]{Cid24}.

Therefore, the polar multiplicities of $I_{\geq\beta}$ and $J_{\geq\beta}$ are equal for some integer $\beta \geq {\bf d}$ if and only if the RA-multiplicities for $I$ and $J$ are equal. Similarly, the polar multiplicities of $\mathsf{I}_{\geq\beta}$ and $\mathsf{J}_{\geq\beta}$ are equal for some integer $\beta \geq {\bf d}$ if and only if the RA-multiplicities for $\mathsf{I}$ and $\mathsf{J}$ are equal.
\end{rmk}

\begin{defn}
Let $(R,{\bf m})$ be a $d$-dimensional Noetherian local ring and $I\subseteq R$ be an ideal. The \emph{$j$-multiplicity} of $I$ is defined to be the integer $$j(I) = \lim_{n\to\infty}\dfrac{\ell_R\left(H^0_{\bf m}\left(I^n/I^{n+1}\right)\right)}{n^{d-1}/(d-1)!}.$$
\end{defn}

In the following theorem, we obtain a localization-free criterion for integral dependence using various types of multiplicities.

\begin{thm}\label{comp}
Adopt Notations \ref{nnotation}. Further assume that $R$ is a domain. Then the following statements are equivalent:
\begin{enumerate}
\item[$(i)$] $\overline{I} = \overline{J}$.
\item[$(ii)$] $e\big(S[\mathsf{I}t]_{\Delta_{(c,1)}}\big) =  e\big(S[\mathsf{J}t]_{\Delta_{(c,1)}}\big)$ for some integer (all integers) $c>{\bf d}$.
\item[$(iii)$] $e_i\left(S[\mathsf{I}t]\right) = e_i\left(S[\mathsf{J}t]\right)$ for all $0\leq i\leq \dim R/I$.
\item[$(iv)$] $e_{i}\big(\mathbf{n} | \mathsf{I}_{\geq\bf d}\big)=e_{i}\big(\mathbf{n} | \mathsf{J}_{\geq\bf d}\big)$ for all $i$, where  $\mathrm{height}\;I\leq i\leq d$.
\item[$(v)$] $e_{d}\big(\mathbf{n} |\mathsf{I}_{\geq c}\big) = e_{d}\big(\mathbf{n} | \mathsf{J}_{\geq c}\big)$ for some integer (all integers) $c>{\bf d}$.
\item[$(vi)$] $j\big(\mathsf{I}_{\geq c}\big) = j\big(\mathsf{J}_{\geq c}\big)$ for some integer (all integers) $c>{\bf d}$.
\item[$(vii)$] $\varepsilon\big(\mathsf{I}_{\geq c}\big) = \varepsilon\big(\mathsf{J}_{\geq c}\big)$ for some integer (all integers) $c>{\bf d}$.
\end{enumerate}
\end{thm}
\begin{proof}
The assertions $(i)\iff (ii)\iff (iii)$ follow from Theorem \ref{integral1}   replacing $R$ by $S$ and ideals $I$ and $J$ by $\mathsf{I}$ and $\mathsf{J}$ respectively.

\vspace{5pt}

\noindent $(iii)\iff (iv)$. This is a direct consequence of the formulas \eqref{imprel1} and \eqref{imprel2} applied on $S$.

\vspace{5pt}

\noindent $(ii)\iff (v)$. Let $c>{\bf d}$ be an integer. The formulas \eqref{mm}, \eqref{imprel1} and \eqref{imprel2} applied on $S$, imply $$e\big(S[\mathsf{I}t]_{\Delta_{(c,1)}}\big) = \tfrac{1}{d+1}\cdot f_{\{\mathsf{I}^n\}}(c) = \sum_{i=0}^d \binom{d}{i}e_i\left(S[\mathsf{I}t]\right)c^i = e_{d}\big(\mathbf{n}| \mathsf{I}_{\geq c}\big),$$ and similarly $e\big(S[\mathsf{J}t]_{\Delta_{(c,1)}}\big) = e_{d}\big(\mathbf{n}|\mathsf{J}_{\geq c} \big)$. This proves the equivalence.

\vspace{5pt}

\noindent $(v)\iff (vi)$. As $c>{\bf d}$ so the ideals $\mathsf{I}_{\geq c}$ and $\mathsf{J}_{\geq c}$ are generated in equal degrees $c$. Also the ideals $\mathsf{I}_{\geq c}$ and $\mathsf{J}_{\geq c}$ have maximal analytic spreads because $e_{d}\big(\mathbf{n}| \mathsf{I}_{\geq c}\big) = e\big(S[\mathsf{I}t]_{\Delta_{(c,1)}}\big)>0$ and $e_{d}\big(\mathbf{n}| \mathsf{J}_{\geq c}\big) = e\big(S[\mathsf{J}t]_{\Delta_{(c,1)}}\big) >0$, see \cite[Corollary 3.6 and Corollary 3.7]{Tru01}. Then we have $j\big(\mathsf{I}_{\geq c}\big) = c\cdot e_{d}\big(\mathbf{n}|\mathsf{I}_{\geq c} \big)$ and $j\big(\mathsf{J}_{\geq c}\big) = c\cdot e_{d}\big(\mathbf{n}|\mathsf{J}_{\geq c}\big)$ by \cite[Corollary 3.5]{AMP19}. This proves the equivalence.

\vspace{5pt}

\noindent $(i)\iff (vii)$. Since the ideals $\mathsf{I}_{\geq c}$ and $\mathsf{J}_{\geq c}$ are generated in equal degrees $c$, we have
$$f_{\{(\mathsf{I}_{\geq c})^n\}}(x)
=  f_{\{(\mathsf{J}_{\geq c})^n\}}(x) = 0\quad\mbox{for all}\;\; x <c,$$
$$f_{\{(\mathsf{I}_{\geq c})^n\}}(x)
= f_{\{\widetilde{(\mathsf{I}_{\geq c})^n}\}}(x),
\quad\mbox{and}\quad 
 f_{\{(\mathsf{J}_{\geq c})^n\}}(x) =
 f_{\{\widetilde{(\mathsf{J}_{\geq c})^n}\}}(x)
 \quad\mbox{for all}\;\; x >c.$$
Therefore,
$$\varepsilon\big(\mathsf{J}_{\geq c}\big) - \varepsilon\big(\mathsf{I}_{\geq c}\big)
= \int_0^c \left(f_{\{\widetilde{(\mathsf{J}_{\geq c})^n}\}}(x)-f_{\{\widetilde{(\mathsf{I}_{\geq c})^n}\}}(x)\right)dx.$$
From Theorem \ref{mainsat} we know that $f_{\{\widetilde{(\mathsf{J}_{\geq c})^n}\}} - f_{\{\widetilde{(\mathsf{I}_{\geq c})^n}\}}$ is a nonnegative continuous function. Thus
$$\varepsilon\big(\mathsf{I}_{\geq c}\big) = \varepsilon\big(\mathsf{J}_{\geq c}\big)
\iff
f_{\{\widetilde{(\mathsf{I}_{\geq c})^n}\}} = f_{\{\widetilde{(\mathsf{J}_{\geq c})^n}\}}.$$
Also note that the quotients $\mathsf{I}/\mathsf{I}_{\geq c}$ and $\mathsf{J}/\mathsf{J}_{\geq c}$ are finite length $S$-modules. Therefore, $f_{\{\widetilde{(\mathsf{I}_{\geq c})^n}\}} = f_{\{\widetilde{\mathsf{I}^n}\}}$, and $f_{\{\widetilde{(\mathsf{J}_{\geq c})^n}\}}= f_{\{\widetilde{\mathsf{J}^n}\}}$.
Thus, $(i)\iff (vii)$ follows from Theorem \ref{adicfunction}.
\end{proof}

\subsection{Some examples}

\begin{rmk}\label{RAmultiplicity}
Following Notations \ref{n4}, if $\dim\;R/I = i_0$ then we have shown that the set of nonzero RA-multiplicities is a subset of $\{e_{0}(R[It]),\ldots, e_{i_0-1}(R[It]), e_{d-1}(R[It])\}$. However, there are examples which show that this inclusion can be proper.

It is not hard to see that equality of RA-multiplicities of $I$ and $J$ may not ensure $\overline{I} = \overline{J}$, {\em e.g.}, if $I =
{\bf m}^2\subset J= {\bf m}$, where $R$ is as before. Similarly the following example shows that only the equality of epsilon multiplicities is not sufficient to detect the equality of integral closures of homogeneous ideals.
\end{rmk}

\begin{ex}\label{elliptic}
Let $\mathbb{C}$ denote the field of complex numbers. Consider the ring $R=\mathbb{C}[X,Y,Z]/(X^3+Y^3+Z^3)=\mathbb{C}[x,y,z]$ with unique homogeneous maximal ideal ${\bf m}=(x,y,z)$. Choose prime ideals $\mathfrak{p}_1=(x+y,z)$ and $\mathfrak{p}_2=(x+z,y)$ associated to the points $[1:-1:0]$ and $[1:0:-1]$, respectively on the elliptic curve $\mathrm{Proj}\;R$. We set $J=\mathfrak{p}_1$ and $I=\mathfrak{p}_1 \cap \mathfrak{p}_2=(x+y+z,yz)$. Clearly, $I \subset J$ is not a reduction, as $\sqrt{I} \neq \sqrt{J}$.

We may use \cite[Remark 6.12]{DDRV24} to compute the $\varepsilon$-multiplicity of $J$, i.e., \[\varepsilon(J)=\dfrac{(e(R)-1)^2}{e(R)} = \dfrac{(3-1)^2}{3}=\dfrac{4}{3}.\]

In order to compute the $\varepsilon$-multiplicity of $I$ we introduce some notations. Let $S=R[w]$, where $w$ is a variable with $\deg w=1$, ${\bf n} = {\bf m}+(w)$ be the homogeneous maximal ideal of $S$, and $\mathsf{I} = IS$ be the extension of $I$ in $S$. Now from \cite[Theorem 6.10 or Discussions 6.21]{DDRV24}, we have
\[\varepsilon(I)= \dfrac{\left(e_1({\bf n}\vert \mathsf{I}_{\geq 2})\right)^2}{e_0({\bf n}\vert \mathsf{I}_{\geq 2})} - e_2({\bf n}\vert \mathsf{I}_{\geq 2}) = \dfrac{4^2}{3}-4 = \dfrac{4}{3}.\] Further, we get using {\sc Macaulay2} that
\begin{align*}
&e_0(\mathbf{m}| I_{\geq 2})= 3,~  e_1(\mathbf{m}| I_{\geq 2})=4, \quad \mbox{and} \quad e_0(\mathbf{m}| J)=3,~ e_1(\mathbf{m}| J)=2.
\end{align*}
Consequently, by \eqref{imprel1},
	\begin{equation*}
	\boxed{
		e(R)= 3, ~e_0\big(R[It]\big) =-2, \;\; \mbox{ and } \;\;  e_0\big(R[Jt]\big) =-1.
		}
	\end{equation*}
	Thus $\varepsilon(I)=\varepsilon(J)$ but $e_0\big(R[It]\big) \neq e_0\big(R[Jt]\big)$.
\end{ex}

\section{Reduction from domain to equidimensional}

\begin{notations}\label{n6}
Adopt Notations \ref{nnotation}. Here we allow $R$ to be equidimensional (not necessarily a domain). Let $\mathrm{Min}(A)$ denote the collection of minimal prime ideals of a Noetherian ring $A$.
\end{notations}

\begin{propose}\label{prop_associativity}
Let $c >d(I)$ be an integer. If ${\mathfrak p}$ be a minimal prime of $R$ such that  $I \not\subseteq {\mathfrak p}$ then $\tfrac{S[{\mathsf I}t]_{\Delta_{(c,1)}}}{\left(\oplus_{n\geq 0} (\mathfrak{p}S\cap {\mathsf I^n})_{cn}t^n\right)}$ is an integral domain of dimension  $d+1$. If $\mathrm{height}\;I>0$ then
\begin{equation}\label{minprimes}
\mathrm{Min}\big(S[{\mathsf It}]_{\Delta_{(c,1)}}\big) = \left\{\oplus_{n\geq 0} (\mathfrak{p}S\cap {\mathsf I^n})_{cn}t^n\mid \mathfrak{p} \in \mathrm{Min} (R)\right\}.
\end{equation}
In particular, $S[{\mathsf I}t]_{\Delta_{(c,1)}}$ is an equidimensional ring of dimension $d+1$.
\end{propose}
\begin{proof}
We note that $\mathrm{Min}(S) = \{\mathfrak{p}S \mid \mathfrak{p}\in \mathrm{Min}(R)\}$ and $\mathrm{Min}(S[t]) = \{\mathfrak{p}S[t] \mid \mathfrak{p}\in \mathrm{Min}(R)\}$. Therefore, both $S$ and $S[t]$ are equidimensional rings. Consider the composition of ring morphisms
$S[{\mathsf I}t]_{\Delta_{(c,1)}} \hookrightarrow S[{\mathsf I}t]\hookrightarrow S[t]$. Let ${\mathfrak p} \in \mathrm{Min}(R)$. Then the ideal
$$\mathfrak{p}S[t]\cap S[{\mathsf I}t]_{\Delta_{(c,1)}} = \oOplus_{n\geq 0} (\mathfrak{p}S\cap {\mathsf I}^n)_{cn}t^n$$
is a prime ideal and 
$$
\tfrac{S[{\mathsf I}t]_{\Delta_{(c,1)}}}{\mathfrak{p}S[t]\cap S[{\mathsf I}t]_{\Delta_{(c,1)}}}
\cong \oOplus_{n\geq 0} \big(\tfrac{{\mathsf I}^n}{\mathfrak{p}S\cap {\mathsf I}^n}\big)_{cn}t^n
 \cong \oOplus_{n\geq 0} \big(\tfrac{\mathsf{I}^n + \mathfrak{p}S}{\mathfrak{p}S}\big)_{cn}t^n
 \cong {\left(\tfrac{S}{\mathfrak{p}S}\big[\tfrac{{\mathsf I}+\mathfrak{p}S}{\mathfrak{p}S}t\big]\right)}_{\Delta_{(c,1)}}. $$
 
If $I\not\subseteq {\mathfrak p}$ then by \cite[Lemma 2.2]{HT03} or by \cite[Theorem 4.4]{DRT24}
$$\dim {\left(\tfrac{S}{\mathfrak{p}S}\big[\tfrac{{\mathsf I}+\mathfrak{p}S}{\mathfrak{p}S}t\big]\right)}_{\Delta_{(c,1)}} = \dim \tfrac{S}{\mathfrak{p}S}\big[\tfrac{{\mathsf I}+\mathfrak{p}S}{\mathfrak{p}S}t\big]-1 =
 \dim\tfrac{S}{{\mathfrak p}S} = \dim S = d+1.$$
 
Every minimal prime of $S[{\mathsf I}t]_{\Delta_{(c,1)}}$ is a contraction of some minimal prime of $S[t]$. Therefore, if $\mbox{height}~I>0$ then the equality \eqref{minprimes} and hence the rest of the assertion follows.
\end{proof}

\begin{rmk}
It is easy to see that the integral closure of a zero ideal is the nilradical ideal of the ring. In particular, the integral closure of any nilpotent ideal is the nilradical ideal of the ring. So to give a numerical characterization of the integral dependence, we can assume without any loss of generality that the ideal is non-nilpotent.
\end{rmk}

\begin{thm}\label{equidim}
Following Notations \ref{n6}, we further assume that $I$ is a  non-nilpotent ideal of $R$. Then the following statements are equivalent.
\begin{enumerate}
\item[$(i)$] $e\big(S[\mathsf{I}t]_{\Delta_{(c,1)}}\big) = e\big(S[\mathsf{J}t]_{\Delta_{(c,1)}}\big)$ for some (every) integer $c> {\bf d}$.
\item[$(ii)$] ${\overline I} = {\overline J}$.
\end{enumerate}
Further, if $\mathrm{height}\;I =\dim R$ then $e\big(S[\mathsf{I}t]_{\Delta_{(c,1)}}\big) = c^de({\bf m},R)-e(I,R)$ for every integer $c> {\bf d}$.
\end{thm}
\begin{proof}
Let $\mathrm{Min}(R) = \{{\mathfrak p}_1, \ldots, {\mathfrak p}_m\}$.

\vspace{5pt}
\noindent{\emph{Case $1$}}. Suppose $0 <\mathrm{height}\;I$.
Then the minimal prime ideals of $
S[\mathsf{I}t]_{\Delta_{(c,1)}}$  and $S[\mathsf{J}t]_{\Delta_{(c,1)}}$ are of the form 
$P_i = \oplus_{n\geq 0}(\mathfrak{p}_iS\cap \mathsf{I}^n)_{cn}t^n$ and 
$Q_i = \oplus_{n\geq 0}(\mathfrak{p}_iS\cap \mathsf{J}^n)_{cn}t^n$ respectively.
Further, $Q_i = Q_j$ implies that $P_i = P_j$. Therefore, relabelling if necessary, we can write
$\{P_1, \ldots, P_r\}$ and $\{Q_1, \ldots, Q_s\}$ as the set of all distinct minimal primes of 
$S[\mathsf{I}t]_{\Delta_{(c,1)}}$ and $S[\mathsf{J}t]_{\Delta_{(c,1)}}$ respectively.
In particular, $r\leq s \leq m$.

For every minimal prime ${\mathfrak p}$ of $R$, the natural homomorphism $$\varphi_{\mathfrak{p}}\colon {S[\mathsf{I}t]}_{\mathfrak{p}S[t]\cap S[\mathsf{I}t]} \longrightarrow {S[t]}_{\mathfrak{p}S[t]}$$ of Artinian local rings is an isomorphism.  This induces an isomorphism on the diagonal subalgebras, i.e.,
$$\left(S[\mathsf{I}t]_{\Delta_{(c,1)}}\right)_{\mathfrak{p}S[t]\cap S[\mathsf{I}t]_{\Delta_{(c,1)}}} \cong \left(S[t]_{\Delta_{(c,1)}}\right)_{\mathfrak{p}S[t]\cap S[t]_{\Delta_{(c,1)}}}.$$

Now the associativity formula for Hilbert-Samuel multiplicities \cite[Corollary 4.7.8]{BH98} gives
\begin{align}\label{assoc-formula1}
 e\left(S[\mathsf{I}t]_{\Delta_{(c,1)}}\right) &= \sum_{i=1}^r \ell\Big({\left(S[\mathsf{I}t]_{\Delta_{(c,1)}}\right)}_{P_i}\Big)\cdot e\Big(\tfrac{S[\mathsf{I}t]_{\Delta_{(c,1)}}}{P_i}\Big) \nonumber\\
 &= \sum_{i=1}^r \ell\Big(\big(S[t]_{\Delta_{(c,1)}}\big)_{\mathfrak{p}_iS[t]\cap S[t]_{\Delta_{(c,1)}}}\Big)\cdot
 e\Big({\tfrac{S}{\mathfrak{p}_iS}\big[\tfrac{\mathsf{I}+\mathfrak{p}_iS}{\mathfrak{p}_iS}t\big]}_{\Delta_{(c,1)}}\Big).
\end{align}

Similarly we can write 
\begin{equation}\label{assoc-formula2}
 e\left(S[\mathsf{J}t]_{\Delta_{(c,1)}}\right) =  \sum_{i=1}^s \ell\Big(\big(S[t]_{\Delta_{(c,1)}}\big)_{\mathfrak{p}_iS[t]\cap S[t]_{\Delta_{(c,1)}}}\Big)\cdot e\Big({\tfrac{S}{\mathfrak{p}_iS}\big[\tfrac{\mathsf{J}+\mathfrak{p}_iS}{\mathfrak{p}_iS}t\big]}_{\Delta_{(c,1)}}\Big).
\end{equation}

Since $\mathsf{I}\subseteq \mathsf{J}$, for every such ${\mathfrak{p}_i}$ we have
\begin{equation}\label{eqminprime}
e\Big({\tfrac{S}{\mathfrak{p}_iS}\big[\tfrac{\mathsf{I}+\mathfrak{p}_iS}{\mathfrak{p}_iS}t\big]}_{\Delta_{(c,1)}}\Big) \leq e\Big({\tfrac{S}{\mathfrak{p}_iS}\big[\tfrac{\mathsf{J}+\mathfrak{p}_iS}{\mathfrak{p}_iS}t\big]}_{\Delta_{(c,1)}}\Big).
\end{equation}

Therefore, the equality $e\big(S[\mathsf{I}t]_{\Delta_{(c,1)}}\big) = e\big(S[\mathsf{J}t]_{\Delta_{(c,1)}}\big)$
holds if and only if $r=s$ and the inequality in \eqref{eqminprime} is an equality for all $i$. On the other hand by \cite[Proposition 1.1.5]{HS06} we know that
$$\overline{I} = \overline{J}\quad \text{if and only if} \quad \overline{(I+\mathfrak{p}_i)/\mathfrak{p}_i} = \overline{(J+\mathfrak{p}_i)/\mathfrak{p}_i} \quad \text{for all}~i=1,\ldots,m.$$
By Theorem \ref{comp},
$\overline{(I+\mathfrak{p}_i)/\mathfrak{p}_i} = \overline{(J+\mathfrak{p}_i)/\mathfrak{p}_i}$
if and only if for some (every) integer $c > {\bf d}$, we have
$$e\Big({\tfrac{S}{\mathfrak{p}_iS}
\big[\tfrac{{\mathsf{I}+\mathfrak{p}_iS}}{\mathfrak{p}_iS}\big]}_{\Delta_{(c,1)}}\Big)
= e\Big({\tfrac{S}{\mathfrak{p}_iS}\big[\tfrac{\mathsf{J}+\mathfrak{p}_iS}{\mathfrak{p}_iS}\big]}_{\Delta_{(c,1)}}\Big).$$

\vspace{5pt}
\noindent{\emph{Case $2$}}.\quad Suppose that $\mbox{height}~I =0$. Even then $\dim S[\mathsf{I}t]_{\Delta_{(c,1)}} =d+1$ as there exists at least one  minimal prime of $R$ which  does not contain $I$.
On the other hand if $I\subseteq {\mathfrak{p}_i}$ then  
${\frac{S}{\mathfrak{p}_iS}\big[\tfrac{\mathsf{I}+\mathfrak{p}_iS}{{\mathfrak{p}_iS}}t\big]}_{\Delta_{(c,1)}} = R_0$ and those $\mathfrak{p}_i$ do not contribute to the multiplicity of
$S[\mathsf{I}t]_{\Delta_{(c,1)}}$.

Now by relabelling if necessary we can assume that $P_1, \ldots, P_{r_1}$ are all the minimal primes of $S[\mathsf{I}t]_{\Delta_{(c,1)}}$ such that $I\not\subseteq {\mathfrak{p}_i}$, for $1\leq i\leq r_1$.
Similarly we can assume that $Q_1, \ldots, Q_{s_1}$ are all the minimal primes of $S[\mathsf{J}t]_{\Delta_{(c,1)}}$ such that $J\not\subseteq {\mathfrak{p}_i}$ for $1\leq i\leq s_1$. It is obvious that $r_1\leq s_1$.

Now replacing $r$ by $r_1$ in \eqref{assoc-formula1} and $s$ by $s_1$ \eqref{assoc-formula2}, and arguing as before we conclude that the equality $e\big(S[\mathsf{I}t]_{\Delta_{(c,1)}}\big)=e\big(S[\mathsf{J}t]_{\Delta_{(c,1)}}\big)$ holds if and only if
$\overline{(I+\mathfrak{p}_i)/\mathfrak{p}_i} = \overline{(J+\mathfrak{p}_i)/\mathfrak{p}_i}$ for all $1\leq i\leq r_1$. For $r_1+1\leq i\leq m$ we have $\overline{(I+\mathfrak{p}_i)/\mathfrak{p}_i} = \overline{(J+\mathfrak{p}_i)/\mathfrak{p}_i} = 0$. This proves that the assertions $(i)$ and $(ii)$ are  equivalent.

\vspace{5pt}

Suppose that $\mbox{height}\;I = \dim~R$.
Then for all $n\gg 0$, we have $R_{cn+1} = (I^n)_{cn+1}$ which implies
$$e(I,R) = \lim_{n\to 0} \frac{\sum_{m=0}^{cn}\ell_{R_0}(R_m)}{n^d/d!} -
\lim_{n\to 0} \frac{\sum_{m=0}^{cn}\ell_{R_0}\left({(I^n)}_m\right)}{n^d/d!} = c^de({\bf m},R)-e\big(S[\mathsf{I}t]_{\Delta_{(c,1)}}\big).$$
Our assertion now follows from Rees' theorem \cite{Ree61}.

\end{proof}

\section{A Macaulay2 algorithm}\label{M2section}
The Macaulay2 package \href{https://macaulay2.com/doc/Macaulay2/share/doc/Macaulay2/MultiplicitySequence/html/index.html}{\sf MultiplicitySequence}, authored by Chen, Kim and Monta\~no \cite{CKM24}, contains the commands `multiplicitySequence', and `jMult' to compute the multiplicity sequence and $j$-multiplicity of an ideal respectively. So one might be interested to know if there is a Macaulay2 algorithm to compute the density functions at certain slopes to detect if a homogeneous ideal contained in another is a reduction. In this section, we present such an algorithm based on Theorem \ref{comp}.

\vspace{0.15cm}
\noindent
{\bf Algorithm}. Adopt Notations \ref{nnotation}. Further assume that $R$ is a domain. Set $c={\bf d}+1$.

\vspace{0.1cm}
\noindent
{\bf Input}. The sequence $W = (J, I)$.

\vspace{0.1cm}
\noindent
{\bf Output}. Script-1, which is based on Theorem \ref{comp} $(ii)$, detects if $e\big(S[\mathsf{I}t]_{\Delta_{(c,1)}}\big)$ and  $e\big(S[\mathsf{J}t]_{\Delta_{(c,1)}}\big)$ are identical or not and gives an output in the form \emph{true/false}. Whereas, Script-2 is based on Theorem \ref{comp} $(iv)$ and detects whether all the (Teissier) mixed multiplicities are equal. It essentially detects if $e_j\big(\mathsf{n} | \mathsf{I}_{\geq \bf d}\big)=e_j\big(\mathsf{n} | \mathsf{J}_{\geq \bf d}\big)$ for $j=\mbox{height}\;I, \ldots, d$ in $S$. Based on Theorem \ref{comp} $(v)$, one only needs to check whether the top (Teissier) mixed multiplicities are equal, i.e., if $e_d\big(\mathsf{n} | \mathsf{I}_{\geq c}\big)=e_d\big(\mathsf{n} | \mathsf{J}_{\geq c}\big)$ in $S$. However, as the number of generators of $\mathsf{I}_{\geq c}$ and $\mathsf{J}_{\geq c}$ are more than those of $\mathsf{I}_{\geq {\bf d}}$ and $\mathsf{J}_{\geq {\bf d}}$ so using Theorem \ref{comp} $(v)$ may not be computationally efficient.

\vspace{0.25cm}
\noindent
{\bf Script 1}.
\begin{multicols}{2}
	{\footnotesize
		\begin{verbatim}
		--W=(J,I)
		reductionMultDiagonal=W->(
		J:=W#0;
		I:=W#1;
		dJ:=(max degrees J)#0;
		dI:=(max degrees I)#0;
		c:=max{dJ,dI}+1;
		R:=ring J;
		T := getSymbol"T";
		S1:=(flattenRing(R[T]))#0;
		S:=newRing(S1,Degrees=>{numgens S1:1});
		JS:=sub(J,S);
		IS:=sub(I,S);
		n:=ideal vars S;
		JSc:=intersect(JS,n^c);
		ISc:=intersect(IS,n^c);
		KJ:=specialFiberIdeal(JSc, JSc_0);
		KI:=specialFiberIdeal(ISc, ISc_0);
		eI:= degree KI;
		eJ:= degree KJ;
		eI==eJ
		)
		\end{verbatim}
	}
\end{multicols}

\noindent
{\bf Script 2}.
\begin{multicols}{2}
	{\footnotesize
		\begin{verbatim}
		--needsPackage "MixedMultiplicity"
		--W=(J,I)
		reductionMixedMult=W->(
		J:=W#0;
		I:=W#1;
		dJ:=(max degrees J)#0;
		dI:=(max degrees I)#0;
		d:=max{dJ,dI};
		R:=ring J;
		dR:=dim R;
		htI:=dim R - dim (R/I);
		T := getSymbol"T";
		S1:=(flattenRing(R[T]))#0;
		S:=newRing(S1,Degrees=>{numgens S1:1});
		n:=ideal vars S;
		JS:=sub(J,S);
		IS:=sub(I,S);
		JSd:=intersect(JS,n^d);
		ISd:=intersect(IS,n^d);
		for i from htI to dR do << i << "->"
		<<mixedMultiplicity ((n,ISd),(dR-i, i))==
		mixedMultiplicity ((n,JSd),(dR-i, i))<< endl;)
		\end{verbatim}
	}
\end{multicols}

\begin{ex}\label{CRPU}\cite[Example 6.9]{CRPU24}
Let $R=k[X,Y]$ be a polynomial ring over a field $k$ and consider the ideals $I=(X^2,XY^2) \subseteq J=(X^2, XY)$. We can write
	\[I^n=(X)^n \cdot (X,Y^2)^n \quad \mbox{and}\quad J^n=(X)^n \cdot (X,Y)^n\]
	for every $n \geq 1$. Using arguments as in \cite[Example 5.10]{DDRV24}, we get that
	\[\varepsilon(I)=e\big((X,Y^2)\big)=\ell_R(R/(X,Y^2))=2 \quad \mbox{and} \quad \varepsilon(J)=e\big((X,Y)\big)=\ell_R(R/(X,Y))=1.\]
	So $I \subseteq J$ is not a reduction.
\end{ex}
To verify this example, we run the following session in Macaulay2.
\begin{multicols}{2}
	{\small
		\begin{verbatim}
		i1 : R=QQ[X,Y];
		i2 : I=ideal(X^2,X*Y^2);
		i3 : J=ideal(X^2, X*Y);
		i4 : isSubset(I,J)
		o4 =  true
		i5 : time reductionMultDiagonal (J,I)
		-- used 0.0450719s
		o5 = false
		o5 :  Sequence
		i6 : needsPackage "MultiplicitySequence"
		i7 : time multiplicitySequence J
		-- used 0.106662s
		o7 =  HashTable{1 => 1, 2 => 2}
		i8 :  time multiplicitySequence I
		-- used 0.0885962s
		o8 =  HashTable{1 => 1, 2 => 4}
		i9 : needsPackage "ReesAlgebra"
		i10 : time isReduction (J,I)
		-- used 0.00602714s
		o10 =  false
		i11 : needsPackage "MixedMultiplicity"
		i12 : time reductionMixedMult (J,I)
		1->true
		2->false
		-- used 0.0744045s
		\end{verbatim}
	}
\end{multicols}

For the above computations, both the scripts seem more efficient than `MultiplicitySequence'.

\section{Acknowledments}
The authors would like to thank Prof. Steven Dale Cutkosky for several useful discussions concerning Proposition \ref{impsat1}. They would also like to thank Prof. Ngo Viet Trung for suggesting an improvement to Theorem \ref{comp} which simplified the algorithm in Section \ref{M2section}. The second author  would like to thank DAE for providing financial support during her tenure as a visiting fellow at TIFR, Mumbai, where this project was started. After that she was supported by the CRG grant (CRG/2022/007572) of SERB when she was a research associate at IIT Dharwad.

\bibliographystyle{alpha}
\bibliography{Ref}
\end{document}

